\documentclass[11pt]{article}

\usepackage{latexsym,amsfonts,amsmath,amssymb,amsthm,url}
\usepackage[english]{babel}
\usepackage[usenames]{color}

\paperheight=29.7cm
\paperwidth=21cm
\setlength\textwidth{16cm}
\hoffset=-1in
\setlength\marginparsep{0cm}
\setlength\marginparwidth{0cm}
\setlength\marginparpush{0cm}
\setlength\evensidemargin{2.5cm}
\setlength\oddsidemargin{2.5cm}
\setlength\topmargin{2.4cm}
\setlength\headheight{0cm}
\setlength\headsep{0cm}
\voffset=-1in
\setlength\textheight{23.5cm}

\usepackage{chngcntr}
\counterwithin{equation}{section}

\newtheorem{itlemma}{Lemma}[section]
\newtheorem{itproposition}[itlemma]{Proposition}
\newtheorem{theorem}[itlemma]{Theorem}
\newtheorem{itcorollary}[itlemma]{Corollary}
\newtheorem{itremark}[itlemma]{Remark}
\newtheorem{itremarks}[itlemma]{Remarks}
\newtheorem{itdefinition}[itlemma]{Definition}
\newtheorem{itexample}[itlemma]{Example}

\newenvironment{fact}{\begin{itfact}\rm}{\end{itfact}}

\newenvironment{lemma}{\begin{itlemma}}{\end{itlemma}}
\newenvironment{remark}{\begin{itremark}\rm}{\end{itremark}}
\newenvironment{remarks}{\begin{itremarks} \rm}{\end{itremarks}}
\newenvironment{corollary}{\begin{itcorollary}}{\end{itcorollary}}
\newenvironment{proposition}{\begin{itproposition}}{\end{itproposition}}
\newenvironment{definition}{\begin{itdefinition}\rm}{\end{itdefinition}}
\newenvironment{example}{\begin{itexample}\rm}{\end{itexample}}
\newcommand{\bl}[1]{\begin{lemma}\label{#1}}
\newcommand{\br}[1]{\begin{remark}\label{#1}}
\newcommand{\brs}[1]{\begin{remarks}\label{#1}}
\newcommand{\bt}[1]{\begin{theorem}\label{#1}}
\newcommand{\bd}[1]{\begin{definition}\label{#1}}
\newcommand{\bp}[1]{\begin{proposition}\label{#1}}
\newcommand{\bc}[1]{\begin{corollary}\label{#1}}
\newcommand{\bfact}[1]{\begin{fact}\label{#1}}
\newcommand{\bex}[1]{\begin{example}\label{#1}}
\newcommand{\ec}{\end{corollary}}
\newcommand{\efact}{\end{fact}}
\newcommand{\eex}{\end{example}}
\newcommand{\el}{\end{lemma}}
\newcommand{\er}{\end{remark}}
\newcommand{\ers}{\end{remarks}}
\newcommand{\et}{\end{theorem}}
\newcommand{\ed}{\end{definition}}
\newcommand{\ep}{\end{proposition}}


\newcommand{\E}{\mathbb{E}}
\newcommand{\N}{\mathbb{N}}

\newcommand{\Z}{\mathbb{Z}}

\newcommand{\pp}{\mathbb{P}}

\newcommand{\kB}{\mathcal{B}}

\newcommand{\kR}{\mathcal{R}}
\def\R{{\mathcal R}}
\newcommand{\kO}{\mathcal{O}}

\newcommand{\kF}{\mathcal{F}}
\newcommand{\kE}{\mathcal{E}}
\newcommand{\kH}{\mathcal{H}}

\newcommand{\kN}{\mathcal{N}}

\newcommand{\lin}{\left[\kern-0.15em\left[}
\newcommand{\rin} {\right]\kern-0.15em\right]}
\newcommand{\ilin}{\left]\kern-0.15em\left]}
\newcommand{\irin} {\right[\kern-0.15em\right[}

\def\reff#1{(\ref{#1})}
\def \ind {\hbox{1\hskip -3pt I}}

\def\var{\text{Var}}
\def\bs{\backslash}

\newtheorem {theo} {Theorem} [section]

\newtheorem {lem} [theo] {Lemma}

\newtheorem {rem} [theo] {Remark}

\begin{document}

\title{{\bf Boundary of the Range of Transient Random Walk}}
\author{
\normalsize{\textsc{Amine Asselah}\footnote{LAMA, UPEC \& IM\'eRA
E-mail: amine.asselah@u-pec.fr}\ \ \ \&\ \ \textsc{Bruno
Schapira}\footnote{Aix-Marseille Universit\'e, CNRS, Centrale Marseille, I2M, UMR 7373, 13453 Marseille, France. E-mail: bruno.schapira@univ-amu.fr}}}

\date{}
\maketitle

\begin{abstract} We study the boundary of the range of
simple random walk on $\Z^d$ in the transient case $d\ge 3$. 
We show that volumes of the range and its boundary
differ mainly by a martingale. As a consequence, we
obtain an upper bound on the variance of order $n\log n$ in dimension
three. We also establish a Central Limit Theorem in dimension four and
larger.
\end{abstract}

\section{Introduction}
Let $(S_n,\ n\ge 0)$ be a simple random walk on $\Z^d$.
Its range $\R_n=\{S_0,\dots,S_n\}$ is a familiar object of Probability
Theory since Dvoretzky and Erd\"os' influential paper \cite{DE}. 
The object of interest in this paper is the boundary of the range
\begin{equation}\label{def-bound}
\partial \R_n=\{ x\in \R_n:\ \text{there exists}\ y\sim x\
\text{with }y\not\in \R_n\},
\end{equation}
where $x\sim y$ means that $x$ and $y$ are at (graph) distance one.
Our interest was triggered by a recent paper of Berestycki and
Yadin \cite{BY} which proposes a model of
hydrophobic polymer in an aqueous solvent, consisting of tilting
the law of a simple random walk by
$\exp(-\beta |\partial \R_n|)$. One interprets the range as the space
occupied by the polymer, and its complement as the space occupied
by the solvent. Hydrophobic means that the monomers dislike the
solvent, and the polymer tries to minimize the {\it boundary of the range}.
The Gibbs' weight tends to minimize contacts
between the monomers and the solvent, and the steric effect has 
been forgotten to make the model mathematically tractable. 
Besides its physical appeal,
the model gives a central role to the boundary of the range,
an object which remained mainly in the shadow until recently. 
To our knowledge it first appeared in the study of the entropy of the range of a simple random walk
\cite{BKYY}, with the conclusion that in dimension 
two or larger, the entropy of the range scales 
like the size of the boundary of the range.
Recently, Okada \cite{Ok1} has established a law of large numbers 
for the boundary of the range for a transient
random walk, and has obtained bounds on its expectation in dimension two.
\bt{theo-okada}[Okada] 
Consider a simple random walk in dimension $d=2$. Then
\begin{equation}\label{okada-1}
\frac{\pi^2}{2}\le
\lim_{n\to\infty} \frac{\E\big[|\partial \R_n|\big]}{n/\log^2(n)}\le 2\pi^2,
\end{equation}
where part of the result is that the limit exists. 
Moreover, when $d\ge 3$, almost surely
\begin{equation}\label{okada-2}
\lim_{n\to\infty}  \frac{|\partial \R_n|}{n}=
\pp\big(\{z:\ z\sim 0\}\not\subset \R_\infty\cup \widetilde \R_\infty,\ 
H_0=\infty\big),
\end{equation}
where $\R_\infty$ is the range of a random walk in an infinite
time horizon, and $H_0$ is the hitting time of 0, whereas
quantities with tilde correspond to those of an independent copy
of the random walk.
\et
\vspace{0.2cm}
The range of a random walk has nice properties: (i) it is an
increasing function of time, (ii) the event that $S_k$ belongs to
$\R_n$ for $k\le n$ is $\sigma(S_0,\dots,S_k)$-measurable, 
(iii) the volume of the range $\R_n$ is the union of
the collection of sub-ranges $\{S_k,k\in I\}$ as
$I$ runs over a partition of $[0,n]$.
A little thought shows that the boundary of the range shares none of
these properties, making its study more difficult.
The thrust of our study is to show that for a transient random
walk, range and boundary of the range are nonetheless 
correlated objects.
Indeed, we present two ways to
appreciate their similar nature. 
On one hand the sizes of the boundary of the range and some range-like sets defined below (the $\R_{n,V}$) differ 
mainly by a martingale. 
On the other hand, we show that the boundary of the range, as the range itself, can be analyzed through a dyadic decomposition of the path. To make the first statement precise,
we need more notation. Let $V_0=\{z:\ z\sim 0\}$, be the neighbors
of the origin, and for any nonempty
subset $V$ of $V_0$, let $\R_{n,V}$ be the set of sites of $\Z^d$
whose first visit occurs at some time $k\le n$, and such that
$(S_k+V_0)\cap \R^c_{k-1}=S_k+V$. In particular, 
$\R_{n,V}$ behaves like the range in the sense that properties
(i)-(ii) listed above do hold,
and as we will see below, their variance can be bounded using the 
same kind of techniques as for the range.

Note also that $\R_n$ is the disjoint union of the $\R_{n,V}$, 
with $V$ subset of $V_0$.   
We are now ready for our first observation.
\bp{prop-martin} There is a martingale $(M_n,\ n\in \N)$, adapted
to the standard filtration such that for any positive integer $n$,    
\begin{equation}\label{equality}
|\partial \R_n|=
\sum_{V\subset V_0} \rho_V\, |\R_{n-1,V}|
+ M_n+  \kE_{n},
\end{equation}
with $\rho_\emptyset=0$ and for any non-empty $V$ in $V_0$
\begin{equation}\label{def-rho}
\rho_V=\pp\big(V\not\subset \R_{\infty}\big)\quad\text{and}\quad
\E\left(  \kE_n^2\right)= \left\{ \begin{array}{ll}
\kO(n) & \textrm{if }d=3\\
\kO(\log^3 (n)) & \textrm{if }d=4\\
\kO(1) & \textrm{if }d\ge 5.
\end{array}
\right.
\end{equation}
\ep
\begin{rem}\label{rem-doob}
The decomposition \eqref{equality} is simply Doob's decomposition 
of the adapted process $|\partial \R_n| - \kE_n$, as we see 
more precisely in Section \ref{sec-martingale}. The key observation
however is that the increasing process (in Doob's decomposition) 
behaves like the range.
\end{rem}

Jain and Pruitt \cite{JP} have established a Central Limit
Theorem for the range in dimension three with a variance
scaling like $n\log n$. Proposition~\ref{prop-martin} 
makes us expect that the boundary of the range has a similar
behavior. Indeed, we establish the following estimate on
the mean square of the martingale. This estimate is delicate,
uses precise Green's function asymptotics, and the symmetry of the walk.
It is our main technical contribution.
\bp{propMn}
There are positive constants $\{C_d,d\ge 3\}$, such that
\begin{eqnarray*}
\var\, (M_n) \le \left\{ \begin{array}{ll} 
C_3\,  n \log n & \textrm{if }d=3\\
C_d\, n &  \textrm{if }d\ge 4. 
\end{array}
\right.
\end{eqnarray*}
\ep
Also, following the approach of Jain and Pruitt \cite{JP},
we establish the following estimate on the range-like object $\R_{n,V}$.
\bp{propRn}
Assume that $d=3$, and let $V$ be a nonempty subset of $V_0$. There is a positive constant $C$, such that 
\begin{equation}\label{estim-variance}
\var\, (|\R_{n,V}|)\le C\, n \log n.
\end{equation}
\ep
Then, a useful corollary of Propositions~\ref{prop-martin}, 
\ref{propMn} and \ref{propRn} is the corresponding bound for 
the variance of the boundary of the range in dimension $3$. 
\bt{theo-vard3}
Assume that $d=3$. Then, there is a positive constant $C$, such that 
\begin{equation}\label{main-vard3}
\var\, (|\partial \R_n|) \, \le C\, n\log n.
\end{equation}
\et
\begin{rem} \label{rem-Jain} 
Using the approach of Jain and Pruitt \cite{JP}, it is
not clear how to obtain a Central Limit Theorem for $\R_{n,V}$
(see Remark~\ref{final.rem} of the Appendix).
\end{rem}

Now the boundary of the range has a decomposition
similar to the classical Le Gall's decomposition \cite{LG} in terms of
intersection of independent ranges. This decomposition, though
simple, requires more notation to be presented. 
For integers $n,m$
let $\R(n,n+m)=\{S_k-S_n\}_{n\le k\le n+m}$, with the
shorthand notation $\R_n=\R(0,n)$, and note that
\[
\R(0,n+m)=\R(0,n)\cup\big(S_{n}+\R(n,n+m)\big).
\]
Observe that $\overleftarrow{\R}(0,n):=-S_n+\R(0,n)$ and 
$\R(n,n+m)$ are independent
and that by the symmetry of the walk $\overleftarrow{\R}(0,n)$ 
(resp. $\R(n,n+m)$) has the same law as $\R(0,n)$ (resp. $\R(0,m)$): it  
corresponds to the range of a walk seen backward from position $S_n$.
Finally, note the well known decomposition
\begin{equation}\label{range-equal}
|\R(0,n+m)|=|\R(0,n)|+|\R(n,n+m)|-|\overleftarrow{\R}(0,n)\cap\R(n,n+m)|.
\end{equation}
Equality \reff{range-equal} is the basis of Le Gall's celebrated
paper \cite{LG} on the range of recurrent random walk. 
It is also a key ingredient in most work
on self-intersection of random walks
(see the book of Chen \cite{C}, for many references).

To write a relation as useful as \reff{range-equal} for the
boundary of the range, we introduce more notation. For $\Lambda
\subset \Z^d$, we denote $\Lambda^+=\Lambda+ \overline V_0$, with $\overline V_0 = V_0 \cup \{0\}$, and we define its boundary as 
$$\partial \Lambda = \{z\in \Lambda\ : \ \exists y\in \Lambda^c \textrm{ with } y\sim z\}.$$
Now, our simple observation is as follows.
\bp{prop-legall}
For any integers $n,m$
\begin{equation}\label{boundary-ineq}
0\ge |\partial \R(0,n+m)|-\big(|\partial \R(0,n)|+|\partial \R(n,n+m)|\big)
\ge -Z(n,m),
\end{equation}
with
\begin{equation}\label{def-Z}
Z(n,m)=|\overleftarrow{\R}(0,n)\cap \R^+(n,n+m)|
+|\overleftarrow{\R}^+(0,n)\cap \R(n,n+m)|.
\end{equation}
\ep
We focus now on consequences of this simple decomposition.
For $d\ge 3$, we define functions $n\mapsto\psi_d(n)$, 
with the following dimension depending growth
\begin{equation}\label{def-psi}
\psi_3(n)= \sqrt{ n},\qquad \psi_4(n)= \log n,\qquad
\text{and for }d>4,\qquad \psi_d(n)=1.
\end{equation}
An essential step for a Central Limit Theorem, is to establish a 
linear lower bound on the variance.
Our bounds hold in dimension three and larger.
\bp{prop-vard3}
Assume that $d\ge 3$. There are positive constants $\{c_d,\ d\ge 3\}$, 
such that 
\begin{equation}\label{min-vard3}
\var\, (|\partial \R_n|) \, \ge c_d\, n.
\end{equation}
\ep
The idea behind the linear lower bound \reff{min-vard3} is to show
that there is a {\it clock process} whose fluctuations
are normal (on a scale square root of the time elapsed),
and which is independent of the boundary of the range process.
Thus, typical fluctuations of the clock process,
provoke a time change {\it at constant} boundary of the range.
Note that in dimension $3$, this technique does not allow to obtain a lower bound of order $n\log n$, matching our upper bound (see also  
Remark \ref{final.rem} for some additional comment on this). 

We now formulate our main Theorem.
\bt{prop-dyadic} When dimension is larger than or 
equal to three, there are constants $\{C_d,d\ge 3\}$, such that
for any positive integer $n$ 
\begin{equation}\label{prop-dyad-1}
\frac{C_d\psi_d(n)}{n}\ge \frac{\E[|\partial \R_n|]}{n}-
\lim_{k\to\infty} \frac{\E[|\partial \R_k|]}{k}\ge 0.
\end{equation}
Assume now that the dimension is four or larger. Then,
the limit of $\var( |\partial \R_n|)/n$ exists, is positive, and for all $n\ge 1$, 
\begin{equation}\label{prop-dyad-2}
\left|\frac{\var(|\partial \R_n|)}{n}-
\lim_{k\to\infty} \frac{\var(|\partial \R_k|)}{k}\right|\le 
\frac{C_d \sqrt n\psi_d(n)}{n}.
\end{equation}
Moreover, a standard Central Limit Theorem holds for $|\partial \kR_n|$.
\et
\br{rem-walk}
We have stated our results for the simple random walk,
but they hold, with similar proofs, for walks with
symmetric and finitely supported increments.
\er
Okada obtains also in \cite{Ok1} a large deviation principle 
for the upper tail 
(the probability that the boundary be larger than its mean), 
and in \cite{Ok2} he studies the most frequently visited 
sites of the boundary, 
and proves results analogous to what is known for the range. 

In a companion paper \cite{AS}, we obtain
large deviations for the lower tail, and provide
applications to phase transition for a properly normalized
Berestycki-Yadin's polymer model.

\vspace{0.2cm}
The rest of the paper is organized as follows. 
In Section~\ref{sec-notation}, we fix notation, 
recall known results on the Green's function, 
and prove a result about covering a finite subset.
In Section~\ref{sec-martingale}, we establish the Martingale decomposition
of Proposition~\ref{prop-martin} and prove
Proposition~\ref{propMn}.
We prove Proposition \ref{prop-vard3} in Section~\ref{sec-LBvar}. 
In Section~\ref{sec-legall},
we present the dyadic decomposition for the boundary 
of the range and deduce 
Theorem \ref{prop-dyadic}, using Le Gall's argument. 
Finally in the Appendix, we prove Proposition~\ref{propRn}.

\section{Notation and Prerequisites}\label{sec-notation}
For any $y,z\in \Z^d$, we denote by $\|z-y\|$ the Euclidean norm between $y$ and $z$, and by $\langle y,z\rangle$ 
the corresponding scalar product. 
Then for any $r>0$ we denote by $B(z,r)$ the ball of radius $r$ centered at $z$:
$$B(z,r):=\{y\in \Z^d\ :\ \|z-y\| \le r\}.$$ 
For $x\in \Z^d$, we let $\pp_x$ be the law of the random walk starting from $x$, and denote its standard filtration by $(\kF_k,k\ge 0)$. 
For $\Lambda$ a subset of $\Z^d$ we define the hitting time of $\Lambda$ as   
$$H_\Lambda:=\inf\{n\ge 1\ :\ S_n\in \Lambda\},$$
that we abbreviate in $H_x$ when $\Lambda$ is reduced to a single point $x$. 
Note that in this definition we use the convention to consider only times larger than or equal to one. 
At some point it will also be convenient to consider a shifted version, so we also define for $k\ge 0$, 
\begin{eqnarray}
\label{shifthit}
H_\Lambda^{(k)}:=\inf\{n\ge k\ :\ S_n\in \Lambda\}.
\end{eqnarray}
We will need bounds on the heat kernel, so let us recall a standard result: 
\begin{eqnarray}
\label{kernel}
\pp(S_n=z) \le C\, \frac 1{n^{d/2}}\, \exp(-c\|z\|^2/n)\quad \textrm{for all $z$ and $n\ge 1$}, 
\end{eqnarray}
for some positive constants $c$ and $C$ (see for instance \cite{HSC}). 
Now we recall also the definition and some 
basic properties of Green's function. 
For $u,v\in \Z^d$, the Green's function is
$$G(u,v)=\E_u\Big[\sum_{n\ge 0} \ind\{S_n=v\}\Big]=\pp_u[H_v<\infty]\cdot G(0,0),$$
and we use extensively the well-known bound (see \cite[Theorem 4.3.1]{LL}): 
\begin{eqnarray}
\label{Green}
G(0,z) =\kO\left(\frac1{1+\|z\|^{d-2}}\right).
\end{eqnarray}
We also consider Green's function restricted to a set $A\subset \Z^d$, 
which for $u,v\in A$ is defined by 
$$G_A(u,v)=\E_u\Big[\sum_{n=0}^{H_{A^c}-1} \ind\{S_n=v\}\Big].$$
We recall that $G_A$ is symmetric (see \cite[Lemma 4.6.1]{LL}): 
$$G_A(u,v)=G_A(v,u)\quad \textrm{for all }u,v\in A,$$
and that $G$ is also invariant by translation of the coordinates:   
$G(u,v)= G(0,v-u)$. 
Also, for $n\ge 0$, 
\begin{equation}
\label{def.Gn}
G_n(u,v)=\E_u\Big[\sum_{k=0}^n \ind\{S_n=v\}\Big].
\end{equation} 
It is well known (use \reff{Green} and Theorem 3.6 of \cite{LG}) 
that for $\psi_d$ defined in \reff{def-psi}, we have,
for some positive constants $\{C_d,d\ge 3\}$
\begin{equation}\label{bubble-G}
\sum_{z\in \Z^d} G^2_n(0,z)\le C_d\, \psi_d(n).
\end{equation}
We can now state the main result of this section.
\begin{lem}\label{lemLambda}
Let $\Lambda$ be a fixed finite subset of $\Z^d$, and fix $z\in \Lambda$. 
Then, there is a constant $c(\Lambda)$, such that for any two neighboring sites $y\sim y'$, 
\begin{eqnarray}
\label{eq-cover}
\pp_{y}(\Lambda\subset \kR_\infty)
-\pp_{y'}(\Lambda\subset \kR_\infty)=c(\Lambda)
\frac{\langle y'-y,y-z \rangle}{\|y-z\|^d}+\kO\left(\frac{1}{\|y-z\|^d}\right).
\end{eqnarray}
Moreover, 
\begin{eqnarray*}
\label{def-constant}
c(\Lambda)=\frac{1}{dv_d}\, 
\sum_{x\in \Lambda}\sum_{v\notin \Lambda}\, 
\ind_{\{v\sim x\}} 
\pp_{v}\big(H_\Lambda=\infty \big) \, 
\pp_x\big( \Lambda \subset \kR_\infty  \big),
\end{eqnarray*}
where $v_d$ denote the volume of the unit ball in $\mathbb R^d$.
\end{lem}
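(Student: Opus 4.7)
The plan is to use that $\phi(y):=\pp_y(\Lambda\subset \kR_\infty)$ is bounded by one, harmonic on $\Lambda^c$---a one-step analysis applies because for $y\notin \Lambda$ the event $\Lambda\subset \kR_\infty$ only depends on the walk from time one onwards---and tends to zero at infinity. The strong Markov property applied at $H_\Lambda$ therefore gives, for every $y\notin \Lambda$,
\[
\phi(y)\ =\ \sum_{x\in \Lambda} H_\Lambda^x(y)\,\phi(x),\qquad H_\Lambda^x(y):=\pp_y\!\big(H_\Lambda<\infty,\,S_{H_\Lambda}=x\big),
\]
which reduces the problem to an expansion of $H_\Lambda^x(y)-H_\Lambda^x(y')$.

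The reversibility of the simple random walk further identifies $H_\Lambda^x(y)$ with the expected number of visits to $y$ before the walker from $x\in \Lambda$ first returns to $\Lambda$, and a standard last-passage decomposition at $H_\Lambda$ then yields the explicit formula
\[
H_\Lambda^x(y)\ =\ G(x,y)-\sum_{x'\in \Lambda} q_x(x')\, G(x',y),\qquad q_x(x'):=\pp_x\!\big(H_\Lambda<\infty,\, S_{H_\Lambda}=x'\big),
\]
and one notes the key identity $1-\sum_{x'}q_x(x')=\pp_x(H_\Lambda=\infty)$, the escape probability from $x\in\Lambda$.

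The heart of the argument, and its main obstacle, is then to establish the expansion
\[
G(u,y)-G(u,y')\ =\ c_d(d-2)\,\frac{\langle y-z,\,y'-y\rangle}{\|y-z\|^d}+\kO\!\left(\frac{1}{\|y-z\|^d}\right)
\]
\emph{uniformly} for $u\in \Lambda$, where $c_d=2/((d-2)v_d)$ is the leading constant in the Green function asymptotic $G(0,v)=c_d\|v\|^{-(d-2)}+\kO(\|v\|^{-d})$ from \cite[Theorem~4.3.1]{LL}. I would obtain it by a first-order Taylor expansion of $v\mapsto \|v\|^{-(d-2)}$ at $v=y-u$, and then verify that the discrepancies $\|y-u\|-\|y-z\|=\kO(1)$ and $\langle y-u,y'-y\rangle-\langle y-z,y'-y\rangle=\kO(1)$---both uniform in $u\in \Lambda$ since $\Lambda$ is finite---are absorbed into the $\kO(\|y-z\|^{-d})$ remainder. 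The crucial point is that the leading coefficient does not depend on $u$, so that inserting the expansion into the last-passage formula causes the combination $1-\sum_{x'}q_x(x')=\pp_x(H_\Lambda=\infty)$ to emerge.

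Summing the resulting estimate against $\phi(x)$ then gives
\[
\phi(y)-\phi(y')\ =\ c_d(d-2)\,\frac{\langle y-z,\,y'-y\rangle}{\|y-z\|^d}\sum_{x\in \Lambda}\pp_x(H_\Lambda=\infty)\,\phi(x)+\kO\!\left(\frac{1}{\|y-z\|^d}\right).
\]
A final one-step analysis $\pp_x(H_\Lambda=\infty)=(2d)^{-1}\sum_{v\sim x,\,v\notin \Lambda}\pp_v(H_\Lambda=\infty)$ combined with the arithmetic identity $c_d(d-2)/(2d)=1/(dv_d)$ identifies the bracketed prefactor with the stated formula for $c(\Lambda)$.
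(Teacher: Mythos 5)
Your proposal is correct and follows essentially the same route as the paper: a first-entrance decomposition at $H_\Lambda$, an expression of the hitting distribution $\pp_y(S_{H_\Lambda}=x,\,H_\Lambda<\infty)$ as a combination of full-space Green's functions whose coefficients recombine into the escape probability $\pp_x(H_\Lambda=\infty)$, and the gradient asymptotics of $G$ applied uniformly over the finite set $\Lambda$. The only (immaterial) difference is the intermediate identity: you use reversibility and a last-exit decomposition to get $\pp_y(S_{H_\Lambda}=x,\,H_\Lambda<\infty)=G(x,y)-\sum_{x'}q_x(x')G(x',y)$, whereas the paper passes through the restricted Green's function $G_{\Lambda^c}(v,y)$ summed over the exterior neighbours $v\sim x$; both produce the same leading coefficient and the same constant $c(\Lambda)$.
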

\begin{proof} 
First, since $\Lambda$ is finite, and \reff{eq-cover} is
an asymptotic result, we can always assume that $y$ and $y'$ 
do not belong to $\Lambda$. Now by a first entry decomposition
\begin{eqnarray}
\label{step-1}
\pp_{y}(\Lambda\subset \kR_\infty )=
\sum_{x\in \Lambda} \pp_y\big( S_{H_\Lambda}= x,\ H_\Lambda<\infty \big)
\pp_x\big(\Lambda \subset \kR_\infty \big).
\end{eqnarray}
Next, fix $x\in \Lambda$ and transform the harmonic measure
into the restricted Green's function (see for instance \cite[Lemma 6.3.6]{LL}): 
\begin{eqnarray*}
\label{cover-1}
\pp_y\big(S_{H_\Lambda}= x,\ H_\Lambda<\infty\big)=
\frac 1{2d}\, \sum_{v\in \Lambda^c,\, v\sim x} G_{\Lambda^c}(y,v)=\frac 1{2d}\, \sum_{v\in \Lambda^c,\, v\sim x} G_{\Lambda^c}(v,y). 
\end{eqnarray*}
Note also (see \cite[Proposition 4.6.2]{LL}) that
\begin{eqnarray*}
\label{cover-2}
G_{\Lambda^c}(v,y)=G(v,y)-\E_v\Big[\ind\{H_\Lambda<\infty \}\, 
G(S_{H_\Lambda},y)\Big]. 
\end{eqnarray*}
Therefore,
\begin{eqnarray}\label{cover-3}
\begin{split}
\pp_y\big(S_{H_\Lambda}= x, H_\Lambda<\infty \big)-&\pp_{y'}\big(S_{H_\Lambda}= x,H_\Lambda<\infty\big)=
\frac 1{2d}\, \sum_{v\in \Lambda^c,\, v\sim x}\big(G(v,y)-G(v,y')\big)\\
&-\frac 1{2d}\, \sum_{v\in \Lambda^c,\, v\sim x}  
\E_v\left[ \ind\{ H_\Lambda<\infty \}
\Big(G\big(S_{H_\Lambda},y\big)-G\big(S_{H_\Lambda},y'\big)\Big)\right].
\end{split}
\end{eqnarray}
Now, since $\Lambda$ is finite, we have (\cite[Corollary 4.3.3]{LL}) 
the expansion for any $z'\in \Lambda^+$ (recall that $z$ is a 
given site in $\Lambda$), 
\begin{eqnarray}
\label{cover-4}
G(z',y)-G(z',y')=\frac{2}{v_d} 
\frac{\langle y'-y,y-z\rangle}{\|y-z\|^d}
+\kO\left(\frac{1}{\|y-z\|^d}\right).
\end{eqnarray}
Combining \eqref{step-1}, \eqref{cover-3} and \eqref{cover-4}
we obtain the result \eqref{eq-cover}.
\end{proof}

\section{Martingale Decomposition}\label{sec-martingale}
In this Section, we establish Proposition~\ref{prop-martin}, as well
as Proposition~\ref{propMn} dealing with the variance of the martingale.
\subsection{Definition of the martingale and proof of Proposition~\ref{prop-martin}}
For $V$ nonempty subset of $V_0$ and $ k\ge 0$, let 
$$I_{k,V} = \ind\{S_k\notin \kR_{k-1}\textrm{ and } (S_k +V_0) \cap \kR_k^c = S_k+V\},$$
and
$$J_{k,V} = \ind\{(S_k +V) \nsubseteq  \{ S_j,\ j\ge k\}\}.$$ 
Then for $n\ge 1$, define 
$$J_{k,n,V} = \ind\{ (S_k +V) \nsubseteq  \{ S_k,\dots,S_n\}  \},$$
and 
\begin{eqnarray}
\label{bRnV}
\partial \kR_{n,V} = \{S_k\, :\, I_{k,V} J_{k,n,V} = 1,\ k\le n\}.
\end{eqnarray}
Note that $\partial \R_n$ is the disjoint union of the $\partial \R_{n,V}$, for $V$ non empty subset of $V_0$. 
Now instead of looking at $\sum_{k\le n} I_{k,V}J_{k,n,V}$ (which is equal to $|\partial \R_{n,V}|$), we look at 
$$Y_{n,V} = \sum_{k=0}^{n-1} I_{k,V}J_{k,V}.$$
However, since $Y_{n,V}$ is not adapted to $\kF_n$, we consider 
$$X_{n,V} = \E[Y_{n,V}\mid \kF_n],$$
and think of $X_{n,V}$ as a good approximation for $|\partial \R_{n,V}|$. So we define an error term as 
$$\kE_{n,V} := |\partial \kR_{n,V}| - X_{n,V}.$$
Now the Doob decomposition of the adapted process $X_{n,V}$ reads as 
$X_{n,V} = M_{n,V} + A_{n,V}$, 
with $M_{n,V}$ a martingale and $A_{n,V}$ a predictable process. Since 
$$X_{n,V} = \sum_{k=0}^{n-1} I_{k,V} \E[J_{k,V}\mid \kF_n],$$
we have 
$$A_{n,V} = \sum_{k=0}^{n-1} I_{k,V} \E[J_{k,V}\mid \kF_k].$$ 
Moreover, the Markov property also gives 
$$ \E[J_{k,V}\mid \kF_k] = \E[J_{k,V}] = \pp(V\nsubseteq \kR_\infty )= \rho_V,$$
for any $k\ge 0$. Therefore, 
\begin{eqnarray*}
|\partial \kR_{n,V}| = M_{n,V} + \rho_V |\kR_{n-1,V}| + \kE_{n,V}\qquad \textrm{for all }V\subset V_0,
\end{eqnarray*}
where we defined for $m\ge 0$, 
$$\kR_{m,V} = \{S_k\, :\, I_{k,V} = 1,\ k\le m\}.$$ 
Summing up $M_{n,V}$ over nonempty subsets of $V_0$ 
we obtain another martingale 
$M_n = \sum_{V\subset V_0} M_{n,V}$, 
and the error term $\kE_n=\sum_{V\subset V_0} \kE_{n,V}$, and  
since $|\partial \R_n|$ is also the sum over nonempty subsets of the $|\partial \R_{n,V}|$, 
we obtain the first part of Proposition~\ref{prop-martin}, namely Equation \eqref{equality}.

Now we prove \reff{def-rho}. 
First note that for any $k\le n-1$,  
$$\left|J_{k,n,V} - \E[J_{k,V}\mid \kF_n]\right| \le \pp_{S_n}(H_{S_k+V_0}<\infty)=\kO\left(\frac 1{1+\|S_n-S_k\|^{d-2}}\right),$$
using \eqref{Green} for the last equality. 
Then by using invariance of the walk by time inversion, we get  
\begin{eqnarray}\label{final-green}
\E[\kE_{n,V}^2] = \kO\left(\sum_{k,k'\le n} \, 
\E\left[\frac 1{(1+\|S_k\|^{d-2})(1+\|S_{k'}\|^{d-2})} \right]\right).
\end{eqnarray}
Moreover, by using the heat kernel bound \eqref{kernel}, we arrive at  
\begin{eqnarray}
\label{final-green2}
\E\left[\frac 1{(1+\|S_k\|^{d-2})^2}\right]= \left\{ \begin{array}{ll}
\kO(1/k) & \textrm{if }d=3\\
\kO((\log k)/k^2) &  \textrm{if }d=4\\
\kO(k^{-d/2}) &  \textrm{if }d\ge 5.
\end{array}
\right.
\end{eqnarray}
The desired result follows by using Cauchy-Schwarz. 

\subsection{Variance of the Martingale}
We establish here Proposition \ref{propMn}.
Let us notice that our proof works 
for $M_n$ only, and not for all the $M_{n,V}$'s.   
If we set for $n\ge 0$,
$$\Delta M_n = M_{n+1} - M_n,$$
then, Proposition \ref{propMn} is a direct consequence of the following result. 
\begin{eqnarray}
\label{deltaMn}
\E[(\Delta M_n)^2]= \left\{ \begin{array}{ll} 
\kO(\log n ) & \textrm{if }d=3\\
\kO(1) &  \textrm{if }d\ge 4. 
\end{array}
\right.
\end{eqnarray}
The proof of \eqref{deltaMn} is divided in three steps. 
The first step brings us to a decomposition of $\Delta M_n$ 
as a finite combination of simpler terms \eqref{Mn1}, 
plus a rest whose $L^2$-norm we show is negligible. In the second step, we observe that when we gather together 
some terms \eqref{Mn2}, their $L^2$-norm takes a particularly nice form \eqref{Mn3}. Finally in the third step we 
use these formula and work on it to get the right bound.

\vspace{0.2cm}
\noindent \underline{Step 1.} In this step, we just use the Markov property to write $\Delta M_n$ in a nicer way, 
up to some error term, which is bounded by a deterministic constant. 
Before that, we introduce some more notation. For $k\le n$, set  
$$I_{k,n,V} = \ind \left\{(S_k +V_0) \cap \{S_k,\dots,S_n\}^c = S_k+V\right\}.$$
The Markov property and the translation invariance of the walk show that for all $k\le n$
$$
\E[J_{k,V}\mid \kF_n] = \sum_{V'\subset V_0}
\ind_{\{V\cap V'\neq \emptyset\}} I_{k,n,V'}\,  \pp_{S_n-S_k}\left( V\cap V' \nsubseteq \kR_\infty\right).$$
Note that $I_{k,n,V'} \neq I_{k,n+1,V'}$ imples that $S_{n+1}$ and $S_k$
are neighbors. However, the number of indices $k$ such that
$S_{n+1}$ and $S_k$ are neighbors and $I_{k,V}=1$ is at most $2d$, since by definition of $I_{k,V}$ we only count the first visits to neighbors of $S_{n+1}$. 
Therefore the number of indices $k$ satisfying $I_{k,V} \neq 0$ and $I_{k,n,V'} \neq I_{k,n+1,V'}$, for some $V'$, is bounded by  
$2d$. As a consequence, by using also that terms 
in the sum defining $M_{n,V}$ are bounded 
in absolute value by $1$, we get
\begin{eqnarray*}
\Delta M_{n,V} &=& \sum_{k=0}^{n-1} I_{k,V}(\E[J_{k,V}\mid \kF_{n+1}] -\E[J_{k,V}\mid \kF_n]) +  I_{n,V}\E[J_{n,V}\mid \kF_{n+1}]\\
& =&   \sum_{k=0}^{n-1}  
\sum_{V'\subset V_0}\ind_{\{V\cap V'\neq \emptyset\}}
I_{k,V} \Big\{ I_{k,n+1,V'}\pp_{S_{n+1}-S_k} \left( V\cap V' \nsubseteq \kR_\infty\right)-  \\ &&  \qquad  I_{k,n,V'} \pp_{S_n-S_k}\left( V\cap V'\nsubseteq \kR_\infty\right) \Big\} +  I_{n,V}\E[J_{n,V}\mid \kF_{n+1}] \\
&=& \sum_{k=0}^{n-1} \sum_{V\cap V'\neq \emptyset} I_{k,V}  I_{k,n,V'}  
\Big\{  \pp_{S_{n+1}-S_k} \left( V\cap V' \nsubseteq \kR_\infty\right) - \pp_{S_n-S_k}\left( V\cap V'\nsubseteq \kR_\infty\right)\Big\}  + r_{n,V},
\end{eqnarray*}
with $|r_{n,V}| \le 2d+1$. Summing up over $V$, we get 
\begin{equation}\label{equ-DM}
\Delta M_n= \sum_{k=0}^{n-1} \sum_{V\cap V'\neq \emptyset} I_{k,V} I_{k,n,V'}
\Big\{  \pp_{S_{n+1}-S_k} \left( V\cap V' \nsubseteq \kR_\infty\right) - \pp_{S_n-S_k}\left( V\cap V'\nsubseteq \kR_\infty\right)\Big\}  + r_n,
\end{equation}
with $|r_n|\le 2^d (2d+1)$. 
 Lemma~\ref{lemLambda} is designed to deal
with the right hand side of \reff{equ-DM}, with the result that
\begin{eqnarray}
\label{Mn1}
\Delta M_n =  \sum_{k=0}^{n-1} \sum_{V\cap V'\neq \emptyset} c(V\cap V')  I_{k,V} I_{k,n,V'} \frac{ \langle S_{n+1}-S_n,S_n-S_k\rangle}{1+ \|S_n-S_k\|^d} + \kO\left(B_n\right),
\end{eqnarray}
with
\begin{equation}\label{def-Bn}
B_n=\sum_{z\in \partial \kR_n} \frac 1{ 1+ \|S_n-z\|^d}.
\end{equation}

\vspace{0.2cm}
\noindent \underline{Step 2.} 
The term $B_n$ of \reff{def-Bn} can be bounded as follows. 
By using first the invariance of the law of the walk by time inversion, we can replace the term $S_n-z$ by $z$. Then we write  
\begin{equation}\label{Bn-1}
\E\big[B^2_n]=
\E\Big[\left(\sum_{z\in \partial \kR_n}
\frac{1}{1+\|z\|^d}\right)^2 \Big]
= \sum_{z,z'\in \Z^d} \frac 1{ (1+ \|z\|^d)(1+\|z'\|^d)}\, \pp(z\in 
\partial \kR_n,\, z'\in \partial \kR_n).
\end{equation}
Then by assuming for instance that $\|z\|\le \|z'\|$ (and $z\neq z'$), and by using \eqref{Green} we obtain
\begin{eqnarray}
\label{Greensbound}
\nonumber 
\pp(z\in \partial \kR_n,\, z'\in \partial \kR_n)&\le & \pp(H_z<\infty,\,  H_{z'}<\infty) \\
 &\le & 2G(0,z)G(z,z') =\kO\left(\frac 1{1+\|z\|^{d-2}\|z'-z\|^{d-2}}\right).
\end{eqnarray}
Therefore 
\begin{eqnarray*}
\E\big[ B^2_n \big] =\kO\left(\sum_{1\le \|z\| < \|z'\| } 
\frac 1{\|z\|^{2d-2}\|z'\|^d \|z'-z\|^{d-2}}\right).
\end{eqnarray*}
Next, we divide the last sum into two parts:
\begin{eqnarray*}
&& \sum_{1\le \|z\|<\|z'\|} \frac 1{\|z\|^{2d-2}\|z'\|^d \|z'-z\|^{d-2}}\\
&=& \sum_{1\le \|z\|<\|z'\|\le 2\|z\|} \frac 1{\|z\|^{2d-2}\|z'\|^d \|z'-z\|^{d-2}}+\sum_{1\le 2\|z\|<\|z'\|} \frac 1{\|z\|^{2d-2}\|z'\|^d \|z'-z\|^{d-2}}\\
&=&\kO\left(\sum_{1\le \|z\|<\|z'\|\le 2\|z\|} \frac 1{\|z\|^{3d-2}\|z'-z\|^{d-2}} + \sum_{1\le 2\|z\|<\|z'\|} \frac 1{\|z\|^{2d-2}\|z'\|^{2d-2}}\right)
= \kO(1).
\end{eqnarray*}
Now it remains to bound the main term in \eqref{Mn1}. 
For two nonempty subsets $U$ and $U'$ of $V_0$, write $U\sim U'$, 
if there exists an isometry of $\Z^d$ sending $U$ onto $U'$. This of course defines an equivalence relation on the subsets of $V_0$, and for any representative $U$ of an equivalence class, we define 
$$\widetilde I_{k,n,U} = \sum_{V\cap V' \sim U} I_{k,V}\, I_{k,n,V'}=\ind\{V_0\cap (\R_n^c-S_k)\sim U \}\,$$
and 
$$H_{n,U} =  \sum_{k=0}^{n-1} \,  \widetilde I_{k,n,U}\,  \frac{ \langle S_{n+1}-S_n,S_n-S_k\rangle}{1+ \|S_n-S_k\|^d}.$$
Note that since the function $c(\cdot)$ is invariant under
isometry, we can rewrite the main term in \eqref{Mn1} as 
\begin{eqnarray}
\label{Mn2}
 \sum_U c(U) \, H_{n,U}.
\end{eqnarray}
Then observe that for any $U$, 
$$
\E[H_{n,U}^2\mid \kF_n] = \left\| \sum_{k=0}^{n-1} \,  \widetilde I_{k,n,U}\,  \frac{ S_n-S_k}{1+ \|S_n-S_k\|^d}   \right\|^2.$$
Moreover, since the law of the walk is invariant under time inversion, and since for any path $S_0,\dots, S_n$, and any $k$, 
the indicator $\widetilde I_{k,n,U}$ is equal to $1$
if and only if it is also equal to $1$ for the reversed 
path $S_n,\dots, S_0$, we get  
\begin{eqnarray}
\label{Mn3}
\E[H_{n,U}^2]=\E[\| \kH_{n,U}\|^2],
\end{eqnarray}
where 
\begin{equation}\label{def-HnU}
\kH_{n,U}:= \sum_{z\in \partial \widetilde \kR_{n,U}} 
\frac{z}{1+ \|z\|^d},
\quad\text{with }\quad
\partial \widetilde \kR_{n,U}
:=  \left\{S_k\ :\ \widetilde I_{k,n,U} =1,\ k\le n-1\right\}.
\end{equation} 
Therefore, we only need to prove that for any $U$,
\begin{eqnarray}
\label{obj.step3}
\E[\| \kH_{n,U}\|^2]=  \left\{ \begin{array}{ll} 
\kO(\log n ) & \textrm{if }d=3\\
\kO(1) &  \textrm{if }d\ge 4. 
\end{array}
\right.
\end{eqnarray}

\vspace{0.2cm}
\noindent \underline{Step 3.} First note that 
\begin{eqnarray}
\label{HnU1}
\E[\| \kH_{n,U}\|^2]=\sum_{z,z'\in \Z^d} \frac{\langle z,z'\rangle}{(1+ \|z\|^d)(1+ \|z'\|^d)} \, \pp\left(z\in \partial \widetilde \kR_{n,U},\, z'\in \partial \widetilde \kR_{n,U}\right).
\end{eqnarray}
In dimension $4$ or larger, \reff{obj.step3} can be established as follows. 
First Cauchy-Schwarz inequality gives for all $z,z'$ 
$$|\langle z,z'\rangle | \, \le\,  \|z\| \, \|z'\|,$$
Then, by using again the standard bound on Green's functions,
that is \eqref{Greensbound}, we get the desired bound
\begin{eqnarray*}
\E[\| \kH_{n,U}\|^2]&=&\kO\left(\sum_{1\le \|z\|< \|z'\|} \|z\|^{3-2d} \, \|z'\|^{1-d} \, \|z-z'\|^{2-d}\right) = \kO(1).
\end{eqnarray*}
We consider now the case $d=3$. 
Since it might be interesting to see what changes in dimension $3$, 
we keep the notation $d$ in all formula as long as possible.    
Note that if $z\in \partial \widetilde \kR_{n,U}$, then 
$\|z\|\le n$ and $H_z$ is finite. Therefore the restriction of the sum in \eqref{HnU1} to the set of $z,z'$ satisfying $\|z\|\le \|z'\|\le 2\|z\|$ is bounded in absolute value by 
\begin{eqnarray}
\label{zz'z}
\sum_z\sum_{z'} \ind_{\{\|z\|\le \|z'\|\le 2\|z\|\le 2n\}} \frac{2\|z\|^2}{(1+ \|z\|^d)^2} \, \pp\left(H_z<\infty,\, H_{z' }<\infty \right).
\end{eqnarray}
Moreover, as we have already recalled, for any $z\neq z'$, with $\|z\|\le \|z'\|$, 
$$\pp(H_z<\infty,\, H_{z'}<\infty) \le 2 G(0,z)G(z,z') = \kO\left( \frac 1{\|z\|^{d-2} \|z-z'\|^{d-2}}\right).$$
Therefore, the sum in \eqref{zz'z} is bounded above (up to some constant) by 
$$ 
\sum_z\sum_{z'}\ind_{\{1\le \|z\| < \|z'\|\le 2\|z\|\le 2n\}} \, \|z\|^{4-3d}  \|z-z'\|^{2-d}= \kO\left( \sum_{1\le \|z\|\le n} \, \|z\|^{6-3d}\right) = \kO (\log n).$$ 
It remains to bound the sum in \eqref{HnU1} restricted to the $z$ and $z'$ satisfying $\|z'\|\ge 2\|z\|$. 
To this end observe that the 
price of visiting $z'$ first is too high. Indeed,   
\[
\begin{split}
\sum_z\sum_{z'}\ind_{\{1\le 2\|z\|\le \|z'\|\le n\}}& \frac{\left| \langle z,z'\rangle\right|}{(1+ \|z\|^d)(1+ \|z'\|^d)} \, \pp\left(H_{z'}<H_z<\infty\right)\\
&= \kO\left(\sum_z\sum_{z'}\ind_{\{1\le 2\|z\|\le \|z'\|\le n\}} \|z\|^{1-d}\|z'\|^{3-2d}\|z-z'\|^{2-d}\right)\\
&=  \kO\left(\sum_z\sum_{z'}\ind_{\{1\le 2\|z\| \le  \|z'\| \le n\}} \|z\|^{1-d}\|z'\|^{5-3d}\right)\\
&= \kO(\log n),
\end{split}
\]
where for the first equality we used in particular Cauchy-Schwarz inequality and again the standard bound on Green's function,  and for the second one, we used that when $\|z'\|\ge 2\|z\|$, we have $\|z'\|\asymp \|z-z'\|$. 
Thus in \eqref{HnU1} we consider the events $\{H_z<H_{z'}\}$. 
We now refine this argument in saying that after $H_{z+V_0}$, and after having left the ball $B(z,\|z\|/2)$, it cost too much to return to $z+V_0$ (and the same fact holds for $z'$). 
Formally, for any $z$, define 
$$\tau_z :=\inf\{k \ge H_{z+V_0}\ :\ S_k\notin B(z,\|z\|/2)\},$$
and 
$$\sigma_z : = \inf\{k \ge \tau_z\ :\  S_k \in z+V_0 \}.$$
Then define the event 
$$E_{z,n,U}:= \left\{z\in  \partial \widetilde \kR_{n,U}\right\} \cap \{\sigma_z=\infty \}.$$
Observe next that if $1\le \|z\|\le  \|z'\|/2$,  
$$ \pp\left(z \in \partial \widetilde \kR_{n,U},\, z' \in \partial \widetilde \kR_{n,U},\, (E_{z,n,U}\cap E_{z',n,U})^c \right) = \kO\left( \frac 1{\|z\|^{2d-4} \|z'\|^{d-2}}\right).$$
Therefore, similar computations as above, show that in \eqref{HnU1}, we can replace the event  
$\{z\text{ and }z'\in \partial \widetilde \kR_{n,U}\}$ by $E_{z,n,U}\cap E_{z',n,U}$. So at this point it remains to bound the (absolute value of the) sum 
\begin{eqnarray}
\label{HnUbis}
\sum_{\|z'\|\ge 2\|z\|} \frac{\langle z,z'\rangle}{(1+ \|z\|^d)(1+ \|z'\|^d)} \, \pp\left(H_z<H_{z'},\, E_{z,n,U},\, E_{z',n,U} \right).
\end{eqnarray}
We now eliminate the time $n$-dependence in 
$E_{z,n,U}$ and $E_{z',n,U}$ by replacing these events
respectively by $E_{z,U}$ and $E_{z',U}$ defined as  
$$E_{z,U}: = \{H_z<\infty\}\cap
\{V_0\cap \{S_0-z,\dots,S_{\tau_z}-z\}^c \sim U\}\cap\{\sigma_z=\infty\}.$$
Note that when $\tau_z\le n$ we have $E_{z,U} = E_{z,n,U}$. This 
latter relation holds in particular when $z$ is visited before $z'$, and $z'$ is visited before 
time $n$. Therefore one has 
$$
E_{z,n,U} \cap  E_{z',n,U} \cap\{ H_z<H_{z'}\}=
E_{z,U} \cap  E_{z',n,U} \cap\{ H_z<H_{z'}\}.
$$ 
In other words in \eqref{HnUbis} one can replace the event $E_{z,n,U}$ by $E_{z,U}$. 
We want now to do the same for $z'$, but the argument is a bit more delicate. First define the symmetric difference of two sets $A$ and 
$B$ as $A\, \Delta\,  B = (A\cap B^c) \cup (A^c\cap B)$.  
Recall that we assume $1\le \|z\|\le  \|z'\|/2$. Let now $k\le n$. By using \eqref{kernel} and \eqref{Green}, we get 
for some positive constants $c$ and $C$ (recall also the definition \eqref{shifthit}), 
\begin{eqnarray*}
\pp_z\left(E_{z',k,U}\, \Delta\,  E_{z',U}\right)&
\le & \pp_z\left(H_{z'}\le k \le H_{z'+V_0}^{(k+1)}<\infty\right) \\
&\le & C\, \E_z\left[ \ind_{\{H_{z'}\le k\}}
 \frac 1{1+\|S_k-z'\|^{d-2}}\right] \\
&\le & C\, \sum_{i=1}^k \pp_z(S_i = z')\, \E\left(\frac 1{1+\|S_{k-i}\|^{d-2}}\right) \\
&\le & C\, \sum_{i=1}^k \frac{e^{-c\|z'\|^2/i}}{i\sqrt i}\frac{1}{
1+\sqrt{k-i}},
\end{eqnarray*}
where for the second and third lines 
we used the Strong Markov Property, and Cauchy-Schwarz 
and \reff{final-green2} for the fourth one. 
The last sum above can be bounded by first separating it into two sums, one with indices $i$ smaller than $k/2$, and the other sum over indices $i\ge k/2$. Then using a comparison with an integral for the first sum, one can see that 
$$
\pp_z\left(E_{z',k,U}\, \Delta\,  E_{z',U}\right)\le C
 \frac 1{\|z'\| \sqrt k} \, e^{-c\|z'\|^2/(2k)}.$$
In particular 
$$\sup_{k\ge 1} \, \pp_z\left(E_{z',k,U} \, \Delta\,  E_{z',U}\right) = \kO\left(\frac 1{\|z'\|^2}\right).$$
Then it follows by using again the Markov property and \eqref{Green}, that 
\begin{eqnarray*}
\pp\left(E_{z,U},\, E_{z',n,U}\, \Delta\,  E_{z',U},\, H_z<H_{z'}\right)& \le & \pp\left(E_{z',n,U} \, \Delta\, E_{z',U},\, H_z<H_{z'}\right)\\
&=& \sum_{k\le n} \pp(H_z=n-k) \,\pp_z\left(E_{z',k,U} \, \Delta\,  E_{z',U}\right)\\
&\le &  \pp(H_z\le n)
\sup_{k\ge 1} \, \pp_z\left(E_{z',k,U} \, \Delta\,  E_{z',U}\right)\\
&=&\kO\left(\frac 1{\|z\|^{d-2}}\times \frac 1{ \|z'\|^2}\right).
\end{eqnarray*}
In conclusion, one can indeed replace the event $E_{z',n,U}$ by $E_{z',U}$ in \eqref{HnUbis}. 
Now in the remaining sum, we gather together the pairs $(z,z')$ and $(z,-z')$, and we get, using Cauchy-Schwarz again,  
\begin{eqnarray}
\label{difference}
&&\left|  \sum_{1\le 2\|z\|\le \|z'\|\le n} \frac{ \langle z,z'\rangle}{(1+ \|z\|^d)(1+ \|z'\|^d)} \, \pp\left(E_{z,U},\, E_{z',U},\, H_z<H_{z'}\right)\right|  \\
\nonumber &\le & \sum_{1\le 2\|z\|\le \|z'\|\le n} \frac 2{\|z\|^{d-1} \|z'\|^{d-1}} \,\Big| \pp\left(E_{z,U},\, E_{z',U},\, H_z<H_{z'}\right) - \pp\left(E_{z,U},\, E_{-z',U},\,  H_z<H_{-z'}\right) \Big| .
\end{eqnarray}
Then for any $1\le \|z\|\le \|z'\|/2$, we have 
\begin{eqnarray}
\label{HzV0}
\nonumber &&\pp\left(E_{z,U},\, E_{z',U},\, H_z<H_{z'}\right)\\
&=& \sum_{y\in \overline \partial B(z,\|z\|/2)} \pp\left(E^*_{z,U}, \, H_z<H_{z'},\, S_{\tau_z}=y\right)\,  \pp_y\left(H_{z+V_0}=\infty,\,  E_{z',U}\right), 
\end{eqnarray}
where by $\overline \partial B(z,\|z\|/2)$ we denote the external boundary of $B(z,\|z\|/2)$, and where  
$$E^*_{z,U}:= \{V_0\cap \{S_0-z,\dots,S_{\tau_z}-z\}^c \sim U\}.$$
Now for any $y\in \overline \partial B(z,\|z\|/2)$, and $\|z'\|\ge 2\|z\|$, by using again \eqref{Green} we get
\begin{eqnarray}
\label{HzV02}
\pp_y\left(H_{z+V_0}=\infty,\, E_{z',U}\right)= \pp_y\left(E_{z',U}\right) - \kO\left( \frac 1{\|z\|^{d-2} \|z'\|^{d-2}}\right).
\end{eqnarray}
Moreover, the same argument as in the proof of Lemma \ref{lemLambda}, shows that if $y$ and $y'$ are neighbors,  
$$ \pp_y\left(E_{z',U}\right)= \pp_{y'}\left(E_{z',U}\right) + \kO\left(\frac 1{\|z'-y\|^{d-1}}\right).$$
Therefore if $1\le \|z\|\le \|z'\|/2$ and  $y \in \overline \partial B(z,\|z\|/2)$ we get 
\begin{eqnarray}
\label{HzV03}
\pp_y\left(E_{z',U}\right) =  \pp_0\left(E_{z',U}\right) +  \kO\left(\frac{\|y\|}{\|z'-y\|^{d-1}}\right)=\pp_0\left(E_{z',U}\right) +  \kO\left(\frac{\|z\|}{\|z'\|^{d-1}}\right).
\end{eqnarray}
On the other hand, by symmetry, for any $U$, 
$$\pp_0\left(E_{z',U}\right) = \pp_0\left(E_{-z',U}\right).$$
By combining this with \eqref{difference}, \eqref{HzV0}, \eqref{HzV02}  and \eqref{HzV03},  
we obtain \eqref{obj.step3} and conclude the proof of \eqref{deltaMn}.

\section{Lower Bound on the Variance}\label{sec-LBvar}
In this section, we prove Proposition~\ref{prop-vard3}. The proof is inspired by the proof of Theorem 4.11 in \cite{MS}, where the authors use lazyness of the walk. Here, since the walk we consider is not lazy, we use instead the notion of double backtracks. 
We say that the simple random walk makes a double
backtrack at time $n$, when $S_{n+1}=S_{n-1}$ and $S_{n+2}=S_n$. When this happens 
the range (and its boundary) remain constant during steps
$\{n+1,n+2\}$. With this observation in mind,
a lower bound on the variance is obtained
as we decompose the simple random walk into two independent processes:
a clock process counting the number of double-backtracks (at even times), and
a trajectory without double-backtrack (at even times).

\subsection{Clock Process}
We construct by induction 
a no-double backtrack walk $(\widetilde S_n,\ n\in \N)$.
First, $\widetilde S_0=0$, and $\widetilde S_1$ and $\widetilde S_2-\widetilde S_1$
are chosen uniformly
at random among the elements of $V_0$ (the set of neighbors of the origin). 
Next, assume that $\widetilde S_k$ has been defined for all $k\le 2n$, for some $n\ge 1$.
Let $\kN_2=\{(x,y):\ x\sim 0$ and $y\sim x\}$ and choose $(X,Y)$ uniformly
at random in $\kN_2\setminus\{\big(\widetilde S_{2n-1}-
\widetilde S_{2n},0\big)\}$. Then set
\begin{equation}\label{skel-1}
\widetilde S_{2n+1}=\widetilde S_{2n}+X\quad\text{and}\quad
\widetilde S_{2n+2}=\widetilde S_{2n}+Y.
\end{equation}
Thus, the walk $\widetilde S$ makes no double-backtrack at even times. Note that
by sampling uniformly in the whole of $\kN_2$ 
we would have generated a simple random walk (SRW).
Now, to build a SRW out of $\widetilde S$, it is enough to add at
each even time a geometric number of double-backtracks. The geometric
law is given by 
\begin{equation}\label{clock-1}
 \pp(\xi=k)=(1-p)p^k \quad \text{for all }k\ge 0, 
\end{equation}
with $p=1/(2d)^2$. Note that the mean of $\xi$ is equal to $p/(1-p)$.
Now, consider a sequence $(\xi_n,\ n\ge 1)$ of i.i.d. random variables distributed like $\xi$  
and independent of $\widetilde S$.
Then define 
\begin{equation}\label{clock-2}
\widetilde N_0=\widetilde N_1=0\quad \text{and}\quad \widetilde N_k:=\sum_{i=1}^{[k/2]} \xi_i \quad \text{for }k\ge 2.
\end{equation}
A SRW can be built out from $\widetilde S$ and $\widetilde N$ as follows.
First, $S_i=\widetilde S_i$ for $i=0,1,2$. Then, for any integer $k\ge 1$
\[
S_{2i-1}=\widetilde S_{2k-1}\quad\text{and}\quad 
S_{2i}=\widetilde S_{2k} \quad \text{for all } i\in [k+\widetilde N_{2(k-1)} ,k+\widetilde N_{2k}].
\]
This implies that if $\widetilde \kR$ is the range of $\widetilde S$ and $\partial \widetilde \kR$ its boundary, then 
for any integer $k$
\begin{equation}\label{clock-main}
\kR_{k+2\widetilde N_k}=\widetilde \kR_k \qquad \text{and} \qquad \partial\kR_{k+2\widetilde N_k}=\partial\widetilde \kR_k.
\end{equation}

\subsection{A Law of Large Numbers and some consequences}
Recall that Okada \cite{Ok1} proved a law of large numbers for $|\partial \kR_n|$, 
see \eqref{okada-2}, and call $\nu_d$ the limit of $|\partial \kR_n|/n$. 
Since $\widetilde N_n/n$ also converges almost surely toward $p/[2(1-p)]$, we deduce from \eqref{clock-main} that  
\begin{equation}
\label{LLNtilde}
\frac{|\partial \widetilde \kR_n|}{n}\ \longrightarrow \  \frac{\nu_d}{1-p} \qquad \text{almost surely}.
\end{equation} 
Let us show now another useful property.
We claim that for any $\alpha>0$,
\begin{equation}\label{inter.tildeR}
\lim_{r\to\infty}
\pp\big(|(\R'_\infty)^+\cap \widetilde \kR_r^+|\ge \alpha r\big)=0,
\end{equation}
where $\R'_\infty$ is the total range of another  
simple random walk independent of $\widetilde \R$.  
To see this recall that the process $(\kR_n)$ is increasing,
and therefore using \reff{clock-main} one deduce 
$$
\pp\big(|(\kR'_\infty)^+\cap \widetilde \kR_r^+|\ge \alpha r\big) 
\le  \pp\big(\widetilde N_r\ge \frac{p}{1-p}r\big)
+\pp\big(|(\kR'_\infty)^+\cap \kR_{Cr}^+|\ge \alpha r\big), 
$$
with $C=2p/(1-p)+1$. 
The first term on the right-hand side goes to $0$, in virtue of the law of large numbers satisfied by $\widetilde N$, and the second one also  as can be seen using 
Markov's inequality and 
the estimate:
$$
\E[|(\kR'_\infty)^+\cap \kR_{Cr}^+|]
\le \sum_{x,y\in \overline V_0}\sum_{z\in \Z^d} 
G(0,z+x)G_{Cr}(0,z+y) = \kO(\sqrt{r} \log r),$$
which follows from \eqref{Green} and \cite[Theorem 3.6]{LG}.

A consequence of \reff{inter.tildeR} is the following.
Define $c=\nu_d/[2(1-p)]$. We have that for $k$ large enough, any 
$t\ge 1$, and $r\ge \sqrt k$
\begin{equation}\label{lln-5}
\pp\big(|\partial \widetilde \R_k| \ge t\big)\ge \frac{1}{2}
\quad \Longrightarrow \quad \pp\big(|\partial \widetilde \R_{k+r}| 
\ge t+c r \big)\ge \frac{1}{4},
\end{equation}
and also
\begin{equation}\label{lln-6}
\pp\big(|\partial \widetilde \R_k| \le t\big)\ge \frac{1}{2}
\quad \Longrightarrow \quad 
\pp\big(|\partial \widetilde \R_{k-r}| \le t-c r  \big)\ge \frac{1}{4}.
\end{equation} 
To see this first note that the set-inequality
\reff{boundary-ineq} holds as well for $\widetilde \R$. Hence, with evident notation
\begin{equation}\label{lln-7}
|\partial \widetilde \R_{k+r}| \ge 
|\partial \widetilde \R_r|+|\partial \widetilde \R(r,k+r)| -
2|\widetilde \R^+(r,r+k)\cap\widetilde \R_r^+|.
\end{equation}
Now observe that the last intersection term is stochastically dominated by $|(\R'_\infty)^+\cap\widetilde \R_r^+|$, with $\R'_\infty$ a copy of $\kR_\infty$, independent of $\widetilde \R_r^+$. Therefore,  \eqref{LLNtilde}, \eqref{inter.tildeR} and \reff{lln-7} 
immediately give \eqref{lln-5} and \eqref{lln-6}. 

\subsection{Lower Bound}
First, by using \reff{prop-dyad-1}, there
is a positive constant $C_0>2d$,  such that
\begin{equation}\label{dev.esp}
\left|\E[|\partial \R_n|] - \nu_d n\right|\le C_0  \sqrt n\quad \text{for all }n\ge 1.
\end{equation}
Take $k_n$ to be the integer part of $(1-p)n$. We have
either of the two possibilities
\begin{equation}\label{lbv-1}
(i)\quad \pp\big( |\partial \widetilde \kR_{k_n}| \le \nu_d n\big)\ge \frac{1}{2} 
\quad\text{or}\quad 
(ii)\quad \pp\big(|\partial \widetilde \kR_{k_n}|  \ge \nu_d n\big)\ge \frac{1}{2}.
\end{equation}
Assume for instance that (i) holds, and note that (ii) would be treated symmetrically. 
Define, $i_n=[(1-p)(n-A\sqrt n)]$, with $A=3C_0/(c(1-p))$, and note that using \reff{lln-6}
\begin{equation}\label{lbv-2}
\pp\big(|\partial \widetilde \kR_{i_n}|\le  \nu_d n- 3C_0\sqrt n\big)
)\ge \frac{1}{4},
\end{equation}
for $n$ large enough. 
Now set 
\[
\kB_n=\left\{ \frac{2\widetilde N_{i_n}-2\E[\widetilde N_{i_n}]}{\sqrt n}\in [A+1,A+2]\right\}.
\]
Note that
there is a constant $c_A>0$, such that for all $n$ large enough
\begin{equation}\label{lbv-3}
\pp(\kB_n)\ge c_A.
\end{equation}
Moreover, by construction, 
\begin{equation}
\label{Bninclusion}
\kB_n\ \subset \ \left\{ i_n+2\widetilde N_{i_n} \in [n,n+3\sqrt n]\right\}.
\end{equation}
Now using the independence of $\widetilde N$ and $\partial \widetilde \kR$, \eqref{clock-main}, \eqref{lbv-2}, \eqref{lbv-3} and \eqref{Bninclusion}, we deduce that   
$$
\pp\big(\exists m\in \{0,\dots, 3\sqrt n\}\ :\ |\partial \R_{n+m}| \le \nu_d n - 3C_0\sqrt n\big)
\ge \frac{c_A}{4}.
$$
Then one can use the deterministic bound:  
$$
|\partial \kR_n| \le  |\partial \kR_{n+m}| + 2dm,
$$
which holds for all $n\ge 1$ and $m\ge 0$. 
This gives 
$$\pp(|\partial \kR_n|\le \nu_d n -2C_0\sqrt n)\ge \frac{c_A}4,$$
which implies that $\var(|\partial \kR_n|)/n\ge C_0^2 c_A/4>0$, using \eqref{dev.esp}.

\section{On Le Gall's decomposition}\label{sec-legall}
In this Section, we establish Proposition~\ref{prop-legall} and 
Theorem~\ref{prop-dyadic}.

\subsection{Mean and Variance}
Inequality \reff{boundary-ineq} holds since
\[
z\in \partial \R(0,n)\bs \big(S_n+\R(n,n+m)\big)^+\cup
\partial(S_n+ \R(n,n+m))\bs \R^+(0,n)
\Longrightarrow z\in \partial \R(0,n+m).
\]
Define 
$$X(i,j)=|\partial \R(i,j)|\quad \textrm{and} \quad \overline X(i,j)=X(i,j)-\E[X(i,j)].$$
Observe that in \reff{boundary-ineq} the deviation from linearity 
is written in terms of an intersection of two independent
range-like sets. This emphasizes the similarity between
range and boundary of the range.
Now \reff{boundary-ineq} implies
the same inequalities for the expectation.
\begin{equation}\label{expectation-ineq}
0\ge \E[X(0,n+m)]-\big(\E[X(0,n)]+\E[X(n,n+m)]\big)\ge -\E[Z(n,m)].
\end{equation}
Combining \reff{boundary-ineq} and \reff{expectation-ineq}, we obtain
our key (and simple) estimates
\begin{equation}\label{main-ineq}
|\overline X(0,n+m)-\big(\overline X(0,n)+ \overline X(n,n+m)\big)|\le
\max\big(Z(n,m),\E[Z(n,m)]\big).
\end{equation}
If $\|X\|_p=(\E[X^p])^{1/p}$, then using the triangle inequality,
we obtain for any $p>0$,
\begin{equation}\label{norm-ineq}
\big|\| \overline X(0,n+m)\|_p- \|\overline X(0,n)+ \overline X(n,n+m)\|_p
\big|\le \|Z(n,m)\|_p+\|Z(n,m)\|_1.
\end{equation}
The deviation from linearity of the centered $p$-th moment
will then depend on the $p$-th moment of $Z(n,m)$.
We invoke now Hammersley's Lemma \cite{HA}, which extends the classical subadditivity
argument in a useful manner.
\bl{lem-ham}[Hammersley] Let $(a_n)$, $(b_n)$, and $(b'_n)$ be
sequences such that 
\begin{equation}\label{hyp-hamine}
a_n+a_m-b'_{n+m}\le a_{m+n}\le a_n+a_m+b_{n+m} 
\qquad \textrm{for all $m$ and $n$}.
\end{equation}
Assume also that
the sequences $(b_n)$ and $(b'_n)$ are positive and non-decreasing, and
satisfy
\begin{equation}\label{hyp-ham}
\sum_{n>0} \frac{b_n+b'_n}{n(n+1)}<\infty.
\end{equation}
Then, the limit of $a_n/n$ exists, and
\begin{equation}\label{conc-ham}
-\frac{b'_n}{n}+4
\sum_{k>2n} \frac{b'_k}{k(k+1)}\ge
\frac{a_n}{n}-\lim_{k\to\infty} \frac{a_k}{k}\ge+\frac{b_n}{n}-4
\sum_{k>2n} \frac{b_k}{k(k+1)}.
\end{equation}
\el
\noindent We obtain now the following moment estimate.
\bl{lem-moment}
For any integer $k$, there is a constant $C_k$ such that
for any $n,m$ integers,
\begin{equation}\label{ineq-moment}
\E\big[Z^k(n,m)\big]\le C_k
\big(\psi^k_d(n)\psi^k_d(m)\big)^{1/2}.
\end{equation}
Recall that $\psi_d$ is defined in \reff{def-psi}.
\el
\begin{proof}
Observe that $Z(n,m)$ is bounded as follows.
\begin{eqnarray*}\label{Z-1}
\nonumber Z(n,m)& \le& 2\sum_{z\in \Z^d}
\ind\{z\in \R^+(0,n)\cap \big(S_n+\R^+(n,n+m)\big)\}\\
\nonumber &\le & 2\sum_{z\in \Z^d}
\ind\{z\in \overleftarrow{\R}^+(0,n)\cap \R^+(n,n+m)\}\\
&\le & 2\sum_{x\in \overline V_0}\sum_{y\in \overline V_0}\sum_{z\in \Z^d}
\ind\{z+x\in  \overleftarrow{\R}(0,n),\ z+y\in \R(n,n+m)\}.
\end{eqnarray*}
We now take the expectation of the $k$-th power, and
use the independence of $ \overleftarrow{\R}(0,n)$ and
$ \R(n,n+m)$. Then (recalling the defintion \eqref{def.Gn} of $G_n$ and using \eqref{bubble-G} for the last inequality) 
\begin{equation*}\label{Z-2}
\begin{split}
\E[Z^k &(n,m)]\le 2^k\sum_{x_1,y_1\in \overline V_0}\!\dots\!
\sum_{x_k,y_k\in \overline V_0}\sum_{z_1,\dots,z_k}\E\left[
\prod_{i=1}^k \ind\{z_i+x_i\in \overleftarrow{\R}(0,n)\}
\ind\{z_i+y_i\in \R(n,n+m)\}\right]\\
\le & \  2^k\sum_{x_1,y_1\in \overline V_0}\!\dots\!
\sum_{x_k,y_k\in \overline V_0}\sum_{z_1,\dots,z_k}
\pp\big(H_{z_i+x_i}<n,\ \forall i=1,\dots,k\big)
\pp\big(H_{z_i+y_i}<m,\ \forall i=1,\dots,k\big)\\
\le & \ 2^k |\overline V_0|^{2k}
\Big(\sum_{z_1,\dots,z_k}
\pp\big(H_{z_i}<n,\ \forall i=1,\dots,k\big)^2 \Big)^{1/2}
\Big(\sum_{z_1,\dots,z_k}
\pp\big(H_{z_i}<m,\ \forall i=1,\dots,k\big)^2 \Big)^{1/2}\\
\le & \ 2^k |\overline V_0|^{2k}k!
\Big(\sum_{z_1,\dots,z_k} G^2_n(0,z_1)\dots G^2_n(z_{k-1},z_k)\Big)^{1/2}
\Big(\sum_{z_1,\dots,z_k} G^2_{m}(0,z_1)\dots
G^2_{m}(z_{k-1},z_k)\Big)^{1/2}\\
\le & \ C_k \Big(\psi_d^k(n)\psi_d^k(m)\Big)^{1/2},
\end{split}
\end{equation*}
which concludes the proof. 
\end{proof}
Henceforth, and for simplicity, we think of $\psi_d$ of \reff{def-psi}
rather as $\psi_3(n)=\kO(\sqrt n)$, $\psi_4(n)=\kO(\log(n))$ and for
$d\ge 5$, $\psi_d(n)=\kO(1)$ (in other words, we aggregate in $\psi_d$
innocuous constants).
As an immediate consequence of \reff{expectation-ineq} and
Lemma~\ref{lem-moment}, we obtain for any $n,m\in \N$, 
\begin{equation}\label{dyad-1}
\E[|\partial \R_n|]+\E[|\partial \R_{m}|]
-\max(\psi_d(n),\psi_d(m))\le \E[|\partial \R_{n+m}|] \le
\E[|\partial \R_n|]+\E[|\partial \R_{m}|].
\end{equation}
The inequalities of \reff{dyad-1} and Hammersley's Lemma imply that
the limit of $\E[|\partial \R_n|]/n$ exists
and it yields \reff{prop-dyad-1} of Theorem~\ref{prop-dyadic}. 

\paragraph{Variance of $X(0,n)$.}
Let us write \reff{norm-ineq} for $p=2$
\begin{equation}\label{dyad-var}
\big| \|\overline X(0,n+m)\|_2- \|\overline X(0,n)+\overline X(n,n+m)\|_2\big|\le
2 \|Z(n,m)\|_2.
\end{equation}
Now, the independence of $\overline X(0,n)$ and $\overline X(n,n+m)$ gives
\begin{equation}\label{dyad-indep}
 \|\overline X(0,n)+\overline X(n,n+m)\|_2^2= \|\overline X(0,n)\|_2^2+
 \|\overline X(0,m)\|_2^2.
\end{equation}
By taking squares on both sides of \reff{dyad-var} and using
\reff{dyad-indep}, we obtain
\begin{equation}\label{dyad-detail1}
\begin{split}
\|\overline X(0,n+m)\|_2^2\le& \|\overline X(0,n)\|_2^2+
\|\overline X(0,m)\|_2^2+4 \|\overline X(0,n)
+\overline X(n,n+m)\|_2\|Z(n,m)\|_2\\
&\qquad +4\|Z(n,m)\|_2^2,
\end{split}
\end{equation}
and
\begin{equation}\label{dyad-detail2}
\begin{split}
\|\overline X(0,n)\|_2^2+\|\overline X(0,m)\|_2^2\le&
\|\overline X(0,n+m)\|_2^2+4\|\overline X(0,n+m)\|_2\|Z(n,m)\|_2\\
&\qquad+ 4\|Z(n,m)\|_2^2.
\end{split}
\end{equation}
Now define for $\ell \ge 1$, 
$$A_\ell := \sup_{2^\ell < i\le 2^{\ell +1}} \|\overline X(0,i)\|_2^2.$$
Next, using \eqref{dyad-indep}, \reff{dyad-detail1} and
Lemma~\ref{lem-moment} with $k=2$, we deduce that for any $\ell\ge 1$ and 
$\varepsilon>0$ (using also the inequality $2ab \le \varepsilon a^2+ b^2/\varepsilon$), 
\begin{equation}\label{dyadic-4}
A_{\ell +1} \le (1+\varepsilon)
2A_\ell +(1+\frac{1}{\varepsilon})\psi^2_d(2^\ell).
\end{equation}
We iterate this inequality $L$ times to obtain for some constant $C$ independent of $L$
\begin{equation}\label{dyadic-5}
\begin{split}
A_L \le & \ C(1+\varepsilon) ^L 2^L+
C(1+\frac{1}{\varepsilon})\sum_{\ell=1}^L(1+\varepsilon)^{\ell-1}2^{\ell-1}\psi^2_d(2^{L-\ell})\\
\le &\ C \, L^2 \, 2^L  \qquad
\text{when we choose}\qquad \varepsilon=\frac{1}{L}.
\end{split}
\end{equation}
Then we use the rough bound of \reff{dyadic-5} as an a priori
bound for the upper and lower bounds respectively \reff{dyad-detail1}
and \reff{dyad-detail2} for the sequence $a_n=\var\big( X(0,n)\big)$,
in order to apply Hammersley's Lemma with 
$b_n=b'_n=\sqrt{n}\log n \times \psi_d(n)$.
In dimension four or more
we do fulfill the hypotheses of Hammersley's Lemma,
which in turn produces the improved bound $\var(|\partial \R_n|)\le
Cn$, and then again we can use Hammersley's 
Lemma with a smaller $b_n=b'_n=\sqrt n \psi_d(n)$ which eventually 
yields \reff{prop-dyad-2} of Theorem~\ref{prop-dyadic}. The fact that the limit of the normalized variance is positive follows from 
Proposition \ref{prop-vard3}. 

\subsection{Central Limit Theorem}
The principle of Le Gall's decomposition is to repeat
dividing each strand into smaller and smaller pieces producing
independent boundaries of {\it shifted} ranges. For two reals
$s,t$ let $[s],[t]$ be their integer parts and define
$X(s,t)=X([s],[t])$. For $\ell$ and $k$ integer,
let $X^{(\ell)}_{k,n}=X((k-1)n/2^\ell,kn/2^\ell)$.
Let also $Z^{(\ell)}_{k,n}$ be the functional of the two strands
obtained by dividing the $k$-th strand after $\ell-1$ divisions.
In other words, as in \reff{def-Z} (but without translating here) let
\[
Z^{(\ell)}_{k,n}=|U\cap \widetilde U^+|+|U^+\cap \widetilde U|,
\]
with
\[
U:=\{S_{[(k-1)\frac{n}{2^\ell}]},\dots,S_{[(2k-1)\frac{n}{2^{\ell+1}}]}\},
\quad\text{and}\quad
\widetilde U
:=\{S_{[(2k-1)\frac{n}{2^{\ell+1}}]},\dots,S_{[k\frac{n}{2^\ell}]}\}.
\]
Thus, after $L$ divisions, with $2^L\le n$, we get 
\begin{equation*}\label{legall-key}
\sum_{i=1}^{2^L} X^{(L)}_{i,n}-\sum_{\ell=1}^L\sum_{i=1}^{2^{\ell-1}}
Z^{(\ell)}_{i,n}\le X(0,n)\le \sum_{i=1}^{2^L} X^{(L)}_{i,n}.
\end{equation*}
The key point is that $\{X^{(L)}_{i,n},\ i=1,\dots,2^L\}$
are independent, and have the
same law as $X(0,n/2^L)$ or $X(0,n/2^L+1)$. 
Now, we define the (nonnegative)  
error term $\kE(n)$ as
\begin{equation*}\label{def-E}
X(0,n)=\sum_{i=1}^{2^L} X^{(L)}_{i,n} -\kE(n),
\end{equation*}
and \reff{boundary-ineq} and \eqref{prop-dyad-1} imply that
\begin{equation*}\label{estimate-E}
\E[\kE(n)]\le \sum_{\ell=1}^L 2^\ell \psi_d(\frac{n}{2^\ell}).
\end{equation*}
Note that, in dimension $d\ge 4$,
we can choose $L$ growing to infinity with $n$, and
such that $\E[\kE(n)]/\sqrt n$ goes to 0: for instance
$2^L=\sqrt n/\log^2(n)$. Therefore, for such choice of $L$, 
it suffices to prove the Central Limit Theorem 
for the sum $\sum_{i=1}^{2^L} X^{(L)}_{i,n}$. 
Our strategy is to apply the Lindeberg-Feller 
triangular array Theorem, that we recall for convenience (see for instance \cite[Theorem~3.4.5]{Durrett} for a proof).  
\begin{theorem}[Lindeberg-Feller]
\label{thm:lind}
For each integer $N$ let $(X_{N,i}: \, 1\leq i\leq N)$ 
be a collection of independent random variables with zero mean. Suppose that the following two conditions are satisfied
\newline
{\rm{(i)}} $\sum_{i=1}^{N}\E [X_{N,i}^2] \to \sigma^2>0$ as $N\to \infty$ and 
\newline
{\rm{(ii)}} $\sum_{i=1}^{N}\E[(X_{N,i})^2\ind\{|X_{N,i}|>\varepsilon\}] \to 0$ as $N\to \infty$ for all $\varepsilon>0$.
\newline
Then, $S_N=X_{N,1}+\ldots + X_{N,N} 
\Longrightarrow \sigma \kN(0,1)$ as $N\to \infty$.
\end{theorem}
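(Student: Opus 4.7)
The plan is to apply Lindeberg's classical swapping (or replacement) method, which proves the theorem directly without invoking any heavier CLT. Introduce, on an enlarged probability space, an independent triangular array of centered Gaussians $Y_{N,i}\sim \kN(0,\sigma_{N,i}^2)$ with $\sigma_{N,i}^2=\E[X_{N,i}^2]$, independent also of the $X_{N,j}$, and set $T_N=Y_{N,1}+\ldots+Y_{N,N}$. Then $T_N$ is a single Gaussian of variance $\sum_i\sigma_{N,i}^2\to\sigma^2$ by hypothesis (i), hence $T_N\Longrightarrow\sigma\kN(0,1)$. By the standard $C_b^3$ characterization of weak convergence, it therefore suffices to show that $\E[f(S_N)]-\E[f(T_N)]\to 0$ for every bounded $f$ with three bounded derivatives.

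Next I would use a telescoping decomposition: set $W_{N,i}=X_{N,1}+\ldots+X_{N,i-1}+Y_{N,i+1}+\ldots+Y_{N,N}$, which is independent of both $X_{N,i}$ and $Y_{N,i}$, and write
\[
\E[f(S_N)]-\E[f(T_N)]=\sum_{i=1}^N\E\big[f(W_{N,i}+X_{N,i})-f(W_{N,i}+Y_{N,i})\big].
\]
Taylor expand each bracket around $W_{N,i}$ to second order; conditioning on $W_{N,i}$ and using that $X_{N,i}$ and $Y_{N,i}$ share their first two moments and are independent of $W_{N,i}$, the constant, linear and quadratic contributions cancel \emph{exactly}, so only the third-order Taylor remainders survive.

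The main work is to show that the resulting remainder sum is $o(1)$. The Gaussian contribution is easy: $\E[|Y_{N,i}|^3]=c\,\sigma_{N,i}^3$, and hypothesis (ii) forces $\max_i\sigma_{N,i}^2\to 0$ (from $\sigma_{N,i}^2\le\varepsilon^2+\E[X_{N,i}^2\ind\{|X_{N,i}|>\varepsilon\}]$), so $\sum_i\sigma_{N,i}^3\le(\max_i\sigma_{N,i})\sum_i\sigma_{N,i}^2\to 0$. For the $X$ contribution, which is the delicate point, replace the pure $|x|^3$ remainder bound by the hybrid estimate
\[
\big|f(w+x)-f(w)-xf'(w)-\tfrac{x^2}{2}f''(w)\big|\le\min\Big(\tfrac{\|f'''\|_\infty}{6}|x|^3,\;\|f''\|_\infty\,x^2\Big),
\]
and split each expectation at $|X_{N,i}|=\varepsilon$, using the cubic bound on $\{|X_{N,i}|\le\varepsilon\}$ and the quadratic bound on $\{|X_{N,i}|>\varepsilon\}$. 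This yields
\[
\sum_i\E[\,\cdots\,]\le\tfrac{\|f'''\|_\infty}{6}\,\varepsilon\sum_i\sigma_{N,i}^2+\|f''\|_\infty\sum_i\E\big[X_{N,i}^2\ind\{|X_{N,i}|>\varepsilon\}\big].
\]
The first term is $O(\varepsilon)$ by (i), and the second tends to $0$ by (ii); sending $N\to\infty$ first and then $\varepsilon\to 0$ closes the argument. The main obstacle is precisely this matched cutoff: a naive pure-$|x|^3$ bound need not even be finite, so one is forced to exploit (ii) in the quadratic form in which it is stated by using the $\min$-estimate for the Taylor remainder; matching that cutoff with the Lindeberg splitting is the one genuinely non-routine step.
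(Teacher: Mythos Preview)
Your argument is correct: this is the classical Lindeberg replacement (swapping) proof, and every step you outline goes through as stated --- the telescoping, the exact cancellation of the zeroth, first and second order terms via matching moments and independence, the hybrid $\min(|x|^3,x^2)$ remainder bound, and the $\varepsilon$-splitting that converts condition (ii) directly into control of the tail contribution. The derivation of $\max_i\sigma_{N,i}^2\to 0$ from (ii) and the handling of the Gaussian cubic moments are also correct.

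As for comparison with the paper: there is nothing to compare. The paper does not prove Theorem~\ref{thm:lind}; it is stated for convenience and the reader is referred to Durrett \cite[Theorem~3.4.5]{Durrett}. Your proof therefore supplies strictly more than the paper does. Note that Durrett's proof proceeds via characteristic functions rather than the replacement method, so your route is genuinely different from the cited reference as well; the swapping argument you give is more elementary in that it avoids Fourier analysis entirely and makes the role of the Lindeberg condition (ii) completely transparent, at the cost of requiring the $C_b^3$ characterization of weak convergence as an input.
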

We apply Lindeberg-Feller's Theorem with $N=2^L$ and 
$X_{N,i} = \overline X^L_{i,n}/\sqrt{n}$.
The condition (i) was proved in the previous subsection. 
The condition (ii) is usually called Lindeberg's condition. To check (ii),
we estimate the fourth moment of $\overline X(0,n)$, and 
as was noticed by Le Gall \cite[Remark (iii) p.503]{LG}, 
this is achieved
using the previous decomposition and a sub-additivity argument. More precisely, using \reff{norm-ineq} with $p=4$, we have
\begin{equation*}\label{fourth-1}
\begin{split}
\|\overline X(0,n+m)\|_4\le&\  \Big(
\big( \E[\overline X^4(0,n)]+6 \E[\overline X^2(0,n)]\E[\overline X^2(0,m)]+
\E[\overline X^4(0,m)]\big)\Big)^{1/4}\\
&\qquad +\sqrt{\psi_d(n)\psi_d(m)}.
\end{split}
\end{equation*}
Thus, if we define for $\ell \ge 1$, 
$$A'_\ell := \sup_{2^\ell < i\le 2^{\ell +1}} \|\overline X(0,i)\|_4,$$
we obtain (using also that $(a+b)^{1/4}\le a^{1/4} + b^{1/4}$ for any $a$, $b$),  
\begin{equation*}\label{dyadic-6}
\begin{split}
A'_{\ell+1} \le & \ (2(A'_\ell)^4+6A_\ell^2)^{1/4}+\psi_d(2^\ell)\\
\le &\  2^{1/4} A'_\ell+ 6^{1/4}A_\ell^{1/2} +\psi_d(2^\ell).
\end{split}
\end{equation*}
Define then $A''_\ell= \sup_{2^\ell < i\le 2^{\ell +1}} \|\overline X(0,i)\|_4/2^{\ell/2}$, and recall that $A_\ell \le C_d 2^\ell$, 
for some constant $C_d>0$, in dimension four and larger. Therefore if $d\ge 4$,  
\begin{equation*}\label{induction-dyadic}
A''_{\ell +1 }\le 
\frac{2^{1/4}}{2^{1/2}} A''_{\ell}+6^{1/4}C_d 
+ \frac{\psi_d(2^\ell)}{2^{(\ell +1)/2}}.
\end{equation*}
This recursive inequality implies that
$(A''_\ell)$ is bounded, as well as $n\mapsto \|\overline X(0,n)\|^2_4/n$. 
We then deduce that Lindeberg's condition is satisfied, and 
the Central Limit Theorem holds for $X(0,n)$. 
\vskip 1,cm
\appendix
\begin{center}{\bf Appendix}
\end{center}

\section{Estimates on Ranges}\label{sec-ranges}
In this section, we prove Proposition \ref{propRn}.
We first introduce some other range-like sets allowing us 
to use the approach of Jain and
Pruitt \cite{JP}. 
Recall that the sets $\kR_{n,V}$ are disjoint, 
and for $U\subset V_0$, define
\begin{eqnarray}
\label{link-JP}
\overline{\kR}_{n,U}:=\bigcup_{V\supset U} \kR_{n,V}=\{S_k\ :\ S_k \notin \R_{k-1} \text{ and }S_i\notin (S_k+U),\, i\le k-1,\, 1\le k\le n\}.
\end{eqnarray}
Next for $U\subset  V_0$, define
\begin{eqnarray*}
\alpha(U)=\big| \overline{\kR}_{n,U}\big| - \E\big(\big| \overline{\kR}_{n,U}\big|\big)
\qquad \text{and}\qquad
\beta(U)=\big| \kR_{n,U}\big| -\E\big(\big| \kR_{n,U}\big|\big).
\end{eqnarray*}
The definition \eqref{link-JP} yields
\begin{eqnarray*}
\alpha(U)=\sum_{V\supset U} \beta(V), 
\end{eqnarray*}
and this relation is inverted as follows: 
\begin{eqnarray*}
\label{inverse-ab}
\beta(V)=\sum_{U\supset V} (-1)^{|U\backslash V|}\, \alpha(U).
\end{eqnarray*}
As a consequence, for $V\subset V_0$, 
\begin{eqnarray*}
\textrm{Var}(|\kR_{n,V} |)=\E\left(\beta^2(V)\right)\le 2^{|V_0\backslash V|}\, 
\sum_{U\supset V} \E\left(\alpha^2(U)\right).
\end{eqnarray*}
We will see below that each $\overline{\kR}_{n,V}$ has the same law as a range-like
functional that Jain and Pruitt analyze by using a last passage decomposition, after introducing
some new variables. But let us give more details now. So first, we fix some $V\subset V_0$, and for $n\in \N$, set  
$Z_n^n=1$, and 
\begin{eqnarray*}
\begin{array}{llll}
Z_i &=& {\bf 1}\left(\{S_{i+k}\not\in (S_i+\overline V )\quad  \forall k\ge 1\}\right) &  \forall i\in \N,\\
Z_i^n &=& {\bf 1}\left(\{S_{i+k}\not\in (S_i+\overline V) \quad \forall k=1,\dots,n-i\}\right) & \forall i<n \\
W_i^n &=&Z^n_i-Z_i  & \forall i\le n, 
\end{array}
\end{eqnarray*}
where 
$$\overline V= V\cup \{0\}.$$
A key point in this decomposition is that $Z_n$ and $Z^n_i$ are independent.
Now, define
\begin{eqnarray}
\label{def-JPR}
\underline \kR_{n,V}=\{S_k:\ S_i\not\in S_k+\overline V,\ n\ge i>k,\ 0\le k< n\},
\quad\text{and}\quad |\underline \kR_{n,V}|=\sum_{i=0}^{n-1} Z^n_i.
\end{eqnarray}
Since the increments are symmetric and independent,
$|\overline{\kR}_{n,V}|$ and $|\underline \kR_{n,V}|$
are equal in law.
Now, equality \eqref{def-JPR} reads as
\begin{eqnarray*}
|\underline \kR_{n,V}|=\sum_{i=0}^{n-1} Z_i+\sum_{i=0}^{n-1} W^n_i.
\end{eqnarray*}
Now using that Var$(|\overline{\kR}_{n,V}|)\le \E[(
|\overline{\kR}_{n,V}|-\sum_{i\le n-1} \E[Z_i])^2]$, and that
$(a+b)^2\le 2(a^2+b^2)$ we obtain
\begin{eqnarray}
\label{JP-var}
\textrm{Var}(|\underline \kR_{n,V}|) \le 
2\sum_{i=1}^{n-1} \textrm{Var}(Z_i)+
4\sum_{j=1}^{n-1}\sum_{i=0}^{j-1}\textrm{Cov}(Z_i,Z_j)+ 
4\sum_{j=1}^{n-1}\sum_{i=0}^j \E\left(W^n_iW^n_j\right).
\end{eqnarray}
Next for $i<j< n$, we have (recall the definition \eqref{shifthit})  
\begin{eqnarray*}
\E\left(W^n_i\, W^n_j\right) &=& \pp\left(n<H^{(i+1)}_{S_i+\overline V}<\infty,\, n<H^{(j+1)}_{S_j+\overline V}<\infty \right)\\
&=& \sum_{x\notin \overline V}\pp (S_{j-i}=x,\ H_{\overline V}>j-i)\, \pp_x\big(
n-j<H_{\overline V}<\infty,\ n-j<H_{x+\overline V}<\infty\big)\\
&\le &  \sum_{x\not\in V}\pp (S_{j-i}=x,\ H_{\overline V}>j-i)\, \pp_x\big(
n-j<H_{\overline V}<\infty,\ n-j<H_{x+\overline V}<\infty\big),
\end{eqnarray*}
where for the second equality we just used the Markov property 
and translation invariance of the walk. The last inequality
is written to cover as well the case $i=j$. Therefore,
\begin{eqnarray*}
\sum_{i=0}^j \E\left(W^n_iW^n_j\right)&\le & 
\sum_{x\not\in V} G_j(0,x) \, \pp_x\big(H_{\overline V}<\infty,\ n-j<H_{x+\overline V}<\infty\big) \\
& \le &\sum_{y,z\in \overline V}\, \sum_{x\notin V}\,  G_j(0,x)\, 
\pp_x\big(H_y<\infty,\ n-j<H_{x+z}<\infty\big). 
\end{eqnarray*}
Then Lemma 4 of \cite{JP} shows that 
\begin{eqnarray*}
\sum_{i=0}^j \E\left(W^n_iW^n_j\right) = \left\{
\begin{array}{ll} 
\kO\left(\sqrt{\frac{j}{n-j}}\right) & \textrm{if }d=3\\
\kO\left(\frac{\log j}{n-j}\right) & \textrm{if }d=4\\
\kO\left((n-j)^{1-d/2}\right) & \textrm{if }d\ge 5,
\end{array}
\right.
\end{eqnarray*}
and thus
\begin{eqnarray}
\label{WiWj}
\sum_{j=1}^{n-1}\sum_{i=0}^j \E\left(W^n_iW^n_j\right)= \left\{
\begin{array}{ll} 
\kO(n) & \textrm{if }d=3\\
\kO((\log n)^2) & \textrm{if }d=4\\
\kO(1) & \textrm{if }d\ge 5.
\end{array}
\right.
\end{eqnarray}
Now, for $i<j<n$, by using that $Z_i^j$ and $Z_j$ are independent, we get
$$\textrm{Cov}(Z_i,Z_j)=-\textrm{Cov}(W^j_i, Z_j).$$ 
On the other hand, assuming $i<j\le n$, 
\begin{eqnarray*}
\E(W^j_iZ_j)&=& \pp\left(j<H^{(i+1)}_{S_i+\overline V}<\infty,\, H^{(j+1)}_{S_j+\overline V}=\infty \right)\\
&=&\sum_{x\not\in \overline V}\, \pp(S_{j-i}=x,\, H_{\overline V}>j-i)\, 
\pp_x\big(H_{\overline V}<\infty,\, H_{x+\overline V}=\infty\big).
\end{eqnarray*}
Since in addition,
$$\E(Z_j)=\pp_x\left(H_{x+\overline V}=\infty\right)\quad \textrm{for all $x$},$$
and 
\begin{eqnarray*}
\E(W_i^j)= \sum_{x\notin \overline V} \, \pp \left( S_{j-i} =x,\, H_{\overline V}>j-i\right)\, \pp_x\left(H_{\overline V} <\infty\right),   
\end{eqnarray*}
we deduce that 
\begin{eqnarray*}
\textrm{Cov}(Z_i,Z_j)=\sum_{x\not\in \overline V}\pp(S_{j-i}=x,\, H_{\overline V}>j-i)\, b_V(x),
\end{eqnarray*}
with
\begin{eqnarray}
\label{JPb}
b_V(x):=\pp_x\left(H_{\overline V}<\infty\right)\, \pp_x\left(H_{x+\overline V}=\infty\right)-
\pp_x\left(H_{\overline V}<\infty,\ H_{x+\overline V}=\infty\right).
\end{eqnarray}
Now we need the following equivalent of Lemma 5 of 
\cite{JP}.
\begin{lem}
\label{lem-JPlike} 
For any $V\subset V_0$, and $x\notin \overline V$, 
\begin{eqnarray*}
b_V(x)=\pp_x\big(H_{\overline V}<H_{x+\overline V}<\infty\big)\pp_x(H_{\overline V}=\infty)+\kE(x,V),
\end{eqnarray*}
with
\begin{eqnarray*}
\kE(x,V):=\sum_{z\in x+\overline V}\pp_x\big(S_{H_{x+\overline V}}=z,\, H_{x+\overline V}<H_{\overline V}\big)
\Big(\pp_z\big(H_{\overline V}<\infty\big)-\pp_x\big(H_{\overline V}<\infty\big)\Big).
\end{eqnarray*}
Moreover, 
\begin{eqnarray*}
|\kE(x,V)|= \kO\left( \frac 1{\|x\|^{d-1}}\right).
\end{eqnarray*}
\end{lem}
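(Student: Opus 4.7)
The proof decomposes into two claims: the algebraic identity for $b_V(x)$ and the gradient bound on $|\kE(x,V)|$.

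My plan for the identity is to first rewrite $\kE(x,V)$ using the strong Markov property at time $H_{x+\overline V}$. For $\|x\|$ sufficiently large the sets $\overline V$ and $x+\overline V$ are disjoint, so each landing site $z\in x+\overline V$ lies outside $\overline V$, and strong Markov yields
$$\sum_{z\in x+\overline V}\pp_x\big(S_{H_{x+\overline V}}=z,\, H_{x+\overline V}<H_{\overline V}\big)\,\pp_z(H_{\overline V}<\infty)=\pp_x\big(H_{x+\overline V}<H_{\overline V}<\infty\big).$$
Subtracting the factored product gives
$$\kE(x,V)=\pp_x\big(H_{x+\overline V}<H_{\overline V}<\infty\big)-\pp_x\big(H_{x+\overline V}<H_{\overline V}\big)\,\pp_x(H_{\overline V}<\infty).$$
Next, substituting $\pp_x(H_{x+\overline V}=\infty)=1-\pp_x(H_{x+\overline V}<\infty)$ into the definition of $b_V(x)$ and writing $\pp_x(H_{\overline V}<\infty,H_{x+\overline V}=\infty)=\pp_x(H_{\overline V}<\infty)-\pp_x(H_{\overline V}<\infty,H_{x+\overline V}<\infty)$ produces
$$b_V(x)=\pp_x\big(H_{\overline V}<\infty,\, H_{x+\overline V}<\infty\big)-\pp_x(H_{\overline V}<\infty)\,\pp_x(H_{x+\overline V}<\infty).$$
Decomposing the joint event by the order of $H_{\overline V}$ and $H_{x+\overline V}$ and using $\pp_x(H_{x+\overline V}<H_{\overline V})-\pp_x(H_{x+\overline V}<\infty)=-\pp_x(H_{\overline V}<H_{x+\overline V}<\infty)$, a short algebraic manipulation collapses $b_V(x)-\kE(x,V)$ precisely to $\pp_x(H_{\overline V}<H_{x+\overline V}<\infty)\,\pp_x(H_{\overline V}=\infty)$, which is the desired identity.

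For the bound on $|\kE(x,V)|$, the key observation is that $z-x\in\overline V\subseteq V_0\cup\{0\}$, so each $z$ is either $x$ itself (contributing zero) or a nearest neighbor of $x$. It therefore suffices to prove the gradient estimate
$$\big|\pp_z(H_{\overline V}<\infty)-\pp_x(H_{\overline V}<\infty)\big|=\kO\left(\frac{1}{\|x\|^{d-1}}\right)\qquad\text{for }z\sim x,\ \|x\|\text{ large},$$
and then use $\sum_{z\in x+\overline V}\pp_x(S_{H_{x+\overline V}}=z,\,H_{x+\overline V}<H_{\overline V})\le 1$. This estimate is the analogue of Lemma~\ref{lemLambda} with the hitting probability replacing the covering probability. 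The proof is parallel: start from the first-entry decomposition $\pp_y(H_{\overline V}<\infty)=\sum_{u\in\overline V}\pp_y(S_{H_{\overline V}}=u,\,H_{\overline V}<\infty)$, express the harmonic measure via the restricted Green's function using \cite[Lemma 6.3.6]{LL}, split off the outer contribution via \cite[Proposition 4.6.2]{LL}, and apply the Taylor expansion \reff{cover-4} of $G$ at $y$. The linear part of \reff{cover-4} contributes a term of order $\|x\|^{1-d}$ which dominates the $\|x\|^{-d}$ remainder, yielding the claim.

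The only mildly delicate point is the bookkeeping for the algebraic identity in terms of the three disjoint events $\{H_{\overline V}<H_{x+\overline V}<\infty\}$, $\{H_{x+\overline V}<H_{\overline V}<\infty\}$, $\{H_{\overline V}<\infty,H_{x+\overline V}=\infty\}$ (together with their symmetric counterpart); the gradient estimate is routine once the proof of Lemma~\ref{lemLambda} is in hand.
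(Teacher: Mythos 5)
Your proposal is correct and follows essentially the same route as the paper: the same algebraic reduction of $b_V(x)$ to the covariance form, the same strong Markov decomposition at $H_{x+\overline V}$ identifying $\kE(x,V)$, and the same gradient-of-Green's-function estimate (the paper simply cites \cite[Corollary 4.3.3]{LL} for the last step, which is exactly the Lemma~\ref{lemLambda}-style argument you spell out). No gaps.
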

\noindent Assuming this lemma for a moment, we get 
\begin{eqnarray*}
a_j &=&\sum_{i=0}^{j-1}\,  \textrm{Cov}(Z_i,Z_j)=\sum_{i=0}^{j-1}
\sum_{x\not\in \overline V}\,  \pp(S_{j-i}=x,\, H_{\overline V}>j-i)\,  b_V(x)\\
&=& \kO\left(\sum_{x\notin \overline V} \frac{G_{j}(0,x)}{\|x\|^{d-1}}\right) =   \kO\left(
\sum_{1\le \|x\|\le j} \frac 1{\|x\|^{2d-3}}\right) \\ 
&=& \left\{ 
\begin{array}{ll}
 \kO(\log j)& \textrm{if }d=3\\
\kO(1) & \textrm{if }d\ge 4,
\end{array}
\right.
\end{eqnarray*}
from which we deduce that 
\begin{eqnarray}
\label{ajs}
\sum_{j=0}^{n-1} a_j =  \left\{ 
\begin{array}{ll}
 \kO(n \log n)& \textrm{if }d=3\\
\kO(n) & \textrm{if }d\ge 4.
\end{array}
\right.
\end{eqnarray}
Then Proposition \ref{propRn} follows 
from \eqref{JP-var}, \eqref{WiWj} and \eqref{ajs}. 

\vspace{0.2cm}
\noindent \textit{Proof of Lemma \ref{lem-JPlike}.} 
Note first that 
\begin{eqnarray*}
b_V(x)&=& \pp_x\big(H_{\overline V}<\infty,\ H_{x+\overline V}<\infty\big)-
\pp_x\big(H_{\overline V}<\infty\big)\pp_x\big(H_{x+\overline V}<\infty\big)\\
&=& \pp_x\big(H_{\overline V}<H_{x+\overline V}<\infty\big)+
\pp_x\big(H_{x+\overline V}<H_{\overline V}<\infty\big)-
\pp_x\big(H_{\overline V}<\infty\big)\pp_x\big(H_{x+\overline V}<\infty\big).
\end{eqnarray*}
Moreover, 
\begin{eqnarray*}
\pp_x\big(H_{x+\overline V}<H_{\overline V}<\infty\big)
&=&\sum_{z\in x+\overline V}
\pp_x\big(S_{H_{x+\overline V}}=z,\ H_{x+\overline V}<H_{\overline V}\big)\pp_z\big(H_{\overline V}<\infty\big)\\
&=&  \pp_x\big(H_{x+\overline V}<H_{\overline V}\big)\pp_x(H_{\overline V}<\infty)+\kE(x,V).
\end{eqnarray*}
The first assertion of the lemma follows. The last assertion is then a direct consequence of standard asymptotics on the gradient of the Green's function (see for instance \cite[Corollary 4.3.3]{LL}).
\hfill $\square$

\begin{rem} \label{final.rem} \emph{By adapting the argument in \cite{JP} we could also prove that in dimension $3$, 
$\textrm{Var}(|\underline \kR_{n,V}|) \sim \sigma^2 n \log n$, for some constant $\sigma>0$, and then obtain a central limit theorem for this modified range. However it is not clear how to deduce from it an analogous result for $|\kR_{n,V}|$, which would be useful 
in view of a potential application to the boundary of the range.  }
\end{rem}

\vskip 1,cm
\noindent{\bf Acknowledgements.}
We would like to thank Gregory Maillard for discussions at an early
stage of this work.  We thank Pierre Mathieu for mentioning 
that we omitted to show that the limiting term 
in \eqref{prop-dyad-2} was nonzero, and Perla Sousi for mentioning 
a few other inaccuracies. 
Finally, we thank an anonymous referee for his very careful reading, 
and his numerous corrections and suggestions which 
greatly improved the presentation.
A.A. received support of the A$^*$MIDEX grant 
(ANR-11-IDEX-0001-02) funded by the French Government 
"Investissements d'Avenir" program.

\end{document}